\documentclass[10pt,a4paper,reqno]{amsart}
\usepackage{amsmath, amsthm, amsfonts, amssymb, mathrsfs}
\usepackage{graphicx,xcolor}
\usepackage{subcaption}
\captionsetup[subfigure]{labelfont=rm}
\usepackage{mathtools}
\setcounter{tocdepth}{1}
\usepackage{enumerate}
\usepackage{eucal}
\usepackage{enumitem}
\usepackage{marginnote}
\usepackage[export]{adjustbox}
\usepackage{url}
%
%
\newtheorem{theorem}{Theorem}[section]
\newtheorem{lemma}[theorem]{Lemma}
\newtheorem{proposition}[theorem]{Proposition}

\theoremstyle{remark}
\newtheorem{remark}[theorem]{Remark}
\theoremstyle{definition}

\newtheorem{ass}{Assumption}

\numberwithin{equation}{section}

\newcommand{\Dom}{{\operatorname{Dom}}}
\newcommand{\Ker}{{\operatorname{Ker}}}

\newcommand{\nl}{{\operatorname{nul}}}
\newcommand{\df}{{\operatorname{def}}}

\newcommand\la{\lambda}

\newcommand\wt{\widetilde}
\newcommand\wh{\widehat}

\newcommand\cD{{\mathcal D}}

\newcommand\sS{{\mathscr S}}
\newcommand\sC{{\mathscr C}}
\newcommand{\NN}{{\mathbb N}}
\newcommand{\RR}{{\mathbb R}}
\newcommand{\CC}{{\mathbb C}}

\renewcommand\d{{\rm d}}

\newcommand{\ii}{{\rm i}}
\renewcommand\o{{\rm o}}

\newcommand\ds{\displaystyle}
\newcommand{\ov}[1]{\overline{#1}}
\newcommand\mydot{\,\cdot\,}

\newcommand\sess{\sigma_{\rm ess}}

\DeclareMathOperator\Ran{Ran}

\DeclareMathOperator\Real{Re}

\renewcommand\Re{\Real}

{\large
\begin{document}

\title[Essential spectrum of mixed-order systems of $\Psi$DOs]
{Essential spectrum of elliptic systems of pseudo-differential operators on $L^2(\RR^N) \oplus  L^2(\RR^N)$}

\subjclass[2010]{47G30, 35S05, 47A10, 47A53}
\keywords{Essential spectrum, pseudo-differential operator, mixed-order system, Douglis-Nirenberg ellipticity, Schur complement, approximate inverse}
\date{\today}

\author{Orif \,O.\ Ibrogimov and Christiane Tretter}
\address[O.\,O.\ Ibrogimov]{%
	Department of Mathematics, 
	University College London,
	Gower Street, London,
	WC1E 6BT, UK}
\email{o.ibrogimov@ucl.ac.uk}

\address[Christiane Tretter]{%
	Mathematisches Institut, 
	Universit\"{a}t Bern,
	Sidlerstrasse.\ 5,
	3012 Bern, Switzerland}
\email{tretter@math.unibe.ch}

\date{\today}


\begin{abstract}
Inspired by a result of M.\ W.\ Wong \cite{Wong88-Communications}, we establish an analytic description of the essential spectrum of non-self-adjoint mixed-order systems of pseudo-differential operators on $L^2(\RR^N) \oplus  L^2(\RR^N)$ that are uniformly Douglis-Nirenberg elliptic with positive-order diagonal entries. We apply our result to a problem arising in the dynamics of falling liquid \vspace{-2mm} films.
\end{abstract}

\maketitle


\section{Introduction}
The aim of  this paper is to establish an analytic description of the essential spectrum of operator matrices acting in the Hilbert space $L^2(\RR^N) \oplus L^2(\RR^N)$
induced by non-self-adjoint $2\times 2$ matrix  pseudo-differential operators
	\begin{equation}\label{T0}
	\begin{aligned} 
	T_0 & := \begin{pmatrix} T_a & T_b \\ T_c & T_d \end{pmatrix}, \quad \Dom(T_0) :=  \sS(\RR^N) \oplus \sS(\RR^N).
	\end{aligned}
	\end{equation}
Here $T_a$, $T_b$, $T_c$, $T_d$ are pseudo-differential operators of mixed orders $m$, $n$, $p$, $q\in \RR$ on the Schwartz space $\sS(\RR^N)$ with classical symbols. 
We require that $T_0$ is uniformly Douglis-Nirenberg elliptic on $\RR^N$ and that both $T_a$ and $T_d$ have positive orders,
without loss of generality, we may assume that the order of $T_a$ is greater than or equal to that of $T_d$, \vspace{-2mm} i.e.
\begin{align} 
\label{orders}
m\geq q>0.
\end{align} 

The spectral analysis of such operators plays a crucial role in stability problems of several branches of theoretical physics, in particular, in the dynamics of fluids and magnetism, see e.g.\ \cite{Landau-Lifshitz-87b}, \cite{Lifschitz-89b}. For example, the linear operators arising in the dynamics of Ekman flow, Hagen-Poiseuille flow or a liquid film falling down a vertical wall are exactly of this type, see e.g.\ \cite{Greenberg-marletta-04}, \cite{MT07}, \cite{MT13}, \cite{Film-book-2012}, \cite{Pradas-Tseluiko-Kalliadasis-2011}.

In compact subdomains of $\RR^N$ the spectral properties of matrix (ordinary, partial, pseudo-) differential operators of this type were extensively studied during the last forty years. The most general results on the essential spectrum of $\ell\times\ell$ mixed-order systems of partial differential operators of Douglis-Nirenberg type on compact $N$-dimensional manifolds with boundary were established by G.\ Grubb and G.\ Geymonat \cite{GG77} for a special case of orders which, for $\ell=2$ as above, amounts to  $m=2n=2p > q = 0$.
%
In their celebrated work \cite{ALMS94}, F.V.\ Atkinson, H.\ Langer, R.\ Mennicken and A.A.\ Shkalikov obtained results on the essential spectra of $2\times2$ operator matrices in an abstract setting. As an application, they established an analytic description of the essential spectrum of mixed-order systems of \emph{ordinary} differential operators over compact intervals. 

In non-compact subdomains of $\RR^N$ not much is known in the general setting, a partial exception being the case when the operator matrix is \emph{symmetric} with \emph{ordinary} differential operator entries of specific orders, see \cite{IST15}, \cite{ILLT13} and the references therein. 
In the non-self-adjoint setting, only recently a first step was made in \cite{Ibrogimov-15} for \emph{ordinary} matrix differential operators on $\RR$ under certain restrictions on the coefficients and on the orders of the matrix entries. There the second diagonal entry $D$ is assumed to have \emph{zero order}, while the order of $A$ is required to be the sum of the orders of the off-diagonal entries, i.e.\ $q=0$ and $m=n+p$. 

An analysis of the essential spectrum of (the closures of) matrix pseudo-differen\-tial operators $T_0$ in $L^2(\RR^N)\oplus L^2(\RR^N)$ as in \eqref{T0} seems to be lacking so far not only for the case that $T_d$ has zero order, but also when $T_d$ has positive order, i.e.~$q>0$. In the latter case the essential spectrum has no local origin and may only be caused by the singularity at infinity. While a fairly complete Fredholm theory for Douglis-Nirenberg elliptic systems on $\RR^N$ is available when the latter are considered as \emph{bounded operators} on the scale of classical Sobolev spaces, see e.g.\ \cite{Rabier2012}, the spectral analysis in $L^2(\RR^N)\oplus L^2(\RR^N)$ requires us to consider them as unbounded~operators. 

In the current manuscript, we study the essential spectrum of the closure $\ov{T}_0$ of $T_0$ in $L^2(\RR^N)\oplus L^2(\RR^N)$ for the case when \emph{both} diagonal entries have \emph{positive orders}. Our aim is to provide an analytic description of the essential spectrum based on its origin in the singularity at infinity. This result is obtained in two steps. First we characterize the essential spectrum of the matrix pseudo-differential operator in terms of the essential spectrum of the so-called first Schur complement; here the main tools are approximate and generalized inverses of (semi-) Fredholm operators. Then, assuming that the symbol of the Schur compliment is in the Grushin symbol class and stabilizes at infinity in an appropriate sense, we apply a key result of M.W.\ Wong from \cite{Wong88-Communications}. Our characterization of the essential spectrum is explicit up to a certain exceptional set which is due to the use of the first Schur complement and which needs to be studied separately, e.g.\ using the closedness of the essential spectrum or by means of the second Schur complement. We illustrate our results by applying them to the falling liquid film problem considered in \cite{Film-book-2012}, \cite{Pradas-Tseluiko-Kalliadasis-2011} merely from a physical and numerical point of view.

The paper is organized as follows. Section 2 contains the operator-theoretic setting for the matrix pseudo-differential operator \eqref{T0} and its associated first Schur complement, the main working hypotheses and a discussion of the uniform ellipticity of the Schur complement. Section 3 provides all auxiliary results including the relationship between a parametrix and the generalized inverse of the Schur complement.
Section 4 is devoted to the main results of the paper. It contains the constructions of left approximate inverses for the operator matrix and its first Schur complement, see Theorems~\ref{thm:approx.inv} and \ref{thm2:approx.inv}. In Theorem~\ref{thm:main} the analytic description of the essential spectrum is given in terms of the limiting symbol of the Schur complement. Section 5 contains the application of our results to the falling liquid film problem.

The following notation is used throughout the paper. We write $\langle\mydot,\mydot\rangle$ for the inner product in $L^2(\RR^N)$. By $\sS(\RR^N)$ and $H^s(\RR^N)$, $s\in\RR$, respectively, we denote the Schwartz space and the 
$L^2$-Sobolev space of order $s$. For a Banach space $X$, we denote by $\sC(X)$ the set of closed linear operators acting in $X$. For $T\in\sC(X)$, we denote by $\Dom(T)$, $\Ker(T)$ and $\Ran(T)$ the domain, kernel and range of $T$, respectively. For a densely defined operator $T\in\mathscr{C}(X)$, $\sigma(T)$ denotes its spectrum. Further, $T\in\sC(X)$ is said to be \textit{Fredholm} if $\Ran(T)$ is closed and both $\nl(T):=\dim\Ker(T)$ and $\df(T):=\dim X/\Ran(T)$ are finite. For the essential spectrum, we use the definition 
	\[
    \sess (T) := \{ \la \in \CC : T-\la \text{ is not Fredholm}\},
	\]
which is the set $\sigma_{\rm{e}3}(T)$ in \cite[Section~IX.1]{EE87}. We use the notation $\langle x\rangle:=(1+|x|^2)^{1/2}$ for $x\in\RR^N$. For two functions $f:\Omega_1\to\RR$, $g:\Omega_2\to\RR$, where $\Omega_1$ and $\Omega_2$ are subsets of (not necessarily the same) Euclidean spaces, we write $f\lesssim g$ if there exists a universal constant $c>0$ such that $f(x_1)\leq cg(x_2)$ for $x_1\in\Omega_1$, $x_2\in\Omega_2$.

We shall need the following background from the theory of pseudo-differential operators, see e.g.\ \cite{Taylor-81b}, \cite{Wong-14b}. For $k\in\RR$, the H\"ormander symbol class $S^{k} (\RR^N\!\times\RR^N)$ $= S_{1,0}^{k} (\RR^N\!\times\RR^N)$ is defined to be the set of all infinitely smooth functions $\sigma\in C^\infty(\RR^N\!\times\RR^N)$ such that, for any two multi-indices $\alpha,\beta\in\NN_0^N$, there is a positive constant $C_{\alpha,\beta}$, 
depending only on $\alpha,\beta$, for which 
	\[
	|(\partial^{\beta}_x\partial^{\alpha}_{\xi})\sigma(x,\xi)| \leq C_{\alpha,\beta}\langle \xi \rangle^{k -|\alpha|}, \quad (x,\xi)\in\RR^N\!\times\RR^N.
	\]
Further, we set  
	\[
	S^{-\infty}(\RR^N\!\times\RR^N) := \bigcap_{k\in\RR}S^{k} (\RR^N\!\times\RR^N), \quad S^{\infty}(\RR^N\!\times\RR^N) := \bigcup_{k\in\RR}S^{k} (\RR^N\!\times\RR^N),
	\]
and we recall that for $\sigma\in S^{\infty}(\RR^N\!\times\RR^N)$, the pseudo-differential operator $T_{\sigma}$ with symbol $\sigma$ on the Schwartz space $\mathscr{S}(\RR^N)$ is 
defined by
	\begin{equation*}
	\ds(T_{\sigma}\phi)(x) := \frac{1}{(2\pi)^{\frac{N}{2}}}\int_{\RR^N}{\rm{e}}^{\ii x\cdot\xi}\sigma(x,\xi)\,\wh{\phi}(\xi)\,{\rm{d}}\xi, \quad \phi\in\Dom(T_{\sigma})=\mathscr{S}(\RR^N),
	\end{equation*}
where $\wh{\phi}$ is the Fourier transform of $\phi\in\mathscr{S}(\RR^N)$,
	\begin{equation*}
	\ds\wh{\phi}(\xi) := \frac{1}{(2\pi)^{\frac{N}{2}}}\int_{\RR^N}{\rm{e}}^{-\ii x\cdot\xi}\phi(x) \,{\rm{d}}x, \quad \xi\in\RR^N.
	\end{equation*}

By $\Psi^{k} (\RR^N\!\times\!\RR^N)$, $k\in\RR\cup\{\pm\infty\}$, we denote the set of pseudo-differential operators with symbols in $S^{k} (\RR^N\!\times\RR^N)$.

A pseudo-differential operator $T_\sigma\in\Psi^{k} (\RR^N\!\times\RR^N)$ is called \emph{uniformly elliptic} if its symbol $\sigma$ satisfies 
the relation                       
	\[
	|\sigma(x,\xi)| \gtrsim \langle\xi\rangle^{k} , \qquad x\in\RR^N, \ |\xi|\gtrsim 1.
	\]

A symbol $\sigma\in S^{k} (\RR^N\!\times\RR^N)$, $(x,\xi) \mapsto \sigma(x,\xi)$, is called \emph{homogeneous} (in $\xi$) of degree $k$ if for all $x\in\RR^N$ we have
	\[
	 \sigma(x,\tau\xi) = \tau^{k} \sigma(x,\xi), \qquad \tau>0, \ |\xi| \gtrsim 1.
	\]
By $S^{k} _h(\RR^N\!\times\RR^N)$ we denote the set of all homogeneous symbols of order $k$.

A symbol $\sigma\in S^{k} (\RR^N\!\times\RR^N)$ is called \textit{classical symbol of order $k$}, and we write $\sigma\in S_{cl}^{k} (\RR^N\!\times \RR^N)$, if $\sigma$ admits an 
asymptotic expansion 
	\begin{align}\label{symb:poly-2}
	\sigma(x,\xi) \sim \sum_{j=0}^\infty \sigma_{k-j}(x,\xi), \qquad x\in\RR^N, \ |\xi|\gtrsim 1,
	\end{align}
where $\sigma_{k-j} \in S_h^{k-j}(\RR^N\times\RR^N)$ is homogeneous of order $k-j$ for every $j\in\NN_0$; here \eqref{symb:poly-2} means that, \vspace{-2mm} for all $N\geq0$, 
	\[
	\sigma - \sum_{j=0}^N\sigma_j \in S^{k-N-1}(\RR^N\!\times\RR^N). 
	\]

The space of pseudo-differential operators with classical symbols of order $k$ is denoted by $\Psi_{cl}^{k} (\RR^N\!\times\RR^N)$. Note that for a classical symbol $\sigma\in S_{cl}^{k} (\RR^N\!\times \RR^N)$ the \emph{principal symbol} is the first term $\sigma_{k}$ in its asymptotic expansion \eqref{symb:poly-2}.

\section{Matrix pseudo-differential operators and associated \\first Schur complement}\label{sec:A.def}

	
In the Hilbert space $L^2(\RR^N) \oplus L^2(\RR^N)$, we consider the operator matrix $T_0$ given by \eqref{T0} where
$T_a\in\Psi^{m}_{cl}(\RR^N\!\times\RR^N)$, $T_b\in\Psi^{n}_{cl}(\RR^N\!\times\RR^N)$, 
$T_c\in\Psi^{p}_{cl}(\RR^N\!\times\RR^N)$, $T_d\in\Psi^{q}_{cl}(\RR^N\!\times\RR^N)$
are pseudo-differential operators in $L^2(\RR^N)$ defined on the Schwartz space $\sS(\RR^N)$ 
of orders $m$, $n$, $p$, $q \,\in\RR$ such that \eqref{orders} holds, i.e.\ $m\geq q>0$.

A simple integration by parts argument shows that $\Dom(T_0^*)$ is dense in $L^2(\RR^N)\oplus L^2(\RR^N)$ since it contains  $\sS(\RR^N) \oplus \sS(\RR^N)$. Therefore, $T_0$ is closable and we will denote the closure of $T_0$ by $T$.

We denote the symbols of the operators $T_a$, $T_b$, $T_c$, $T_d$ by $a$, $b$, $c$, $d$, and their principal symbols by $a_{m}$, $b_{n}$, $c_{p}$, $d_{q}$, respectively. Then the principal symbol~$M$ of the operator matrix $T_0$ is the matrix consisting of the principal symbols of its entries, see e.g.\ \cite{Agranovich90}, and is thus given by 
	\begin{align}\label{M(x,xi)}
	M(x,\xi) := \begin{pmatrix} 
	         a_{m}(x,\xi) & b_{n}(x,\xi) \\[2ex]
	         c_{p}(x,\xi) & d_{q}(x,\xi) 
	         \end{pmatrix}, 
	         \qquad (x,\xi)\in\RR^N\!\times\RR^N.
	\end{align}

\begin{ass}\label{ass:A}
	$T_0$ is \emph{uniformly Douglis-Nirenberg elliptic on $\RR^N$}, i.e. 
	\[
	|\det M(x,\xi)| \gtrsim \langle \xi \rangle^{\kappa}, \qquad x\in\RR^N, \ |\xi|\gtrsim 1, 
	\]
	where $\kappa:=\max\{m+q,n+p\} \ (>0)$, see e.g.\ \cite{Agranovich90}. 
\end{ass}

\begin{remark}
Since the orders of both diagonal entries of $T_0$ are positive by \eqref{orders}, Assumption~\ref{ass:A} implies that $T_0-\la$ is uniformly Douglis-Nirenberg elliptic on $\RR^N$ for all $\la\in\CC$.
\end{remark}

If $m+q\!\ne n+p$, an equivalent characterization of the Douglis-Nirenberg ellip\-ticity of $T_0$ on $\RR^N$ in terms of the ellipticity of either its diagonal or off-diagonal matrix entries can be given. 

\begin{lemma}\label{lem:equiv.charac}
	{\rm{(i)}} If $m+q>n+p$, then $T_0$ is uniformly Douglis-Nirenberg elliptic on $\RR^N$ if and only if both $T_a$ and $T_d$ are uniformly elliptic on $\RR^N$. 
	
	\smallskip
	\noindent
	{\rm{(ii)}} If $m+q<n+p$, then $T_0$ is uniformly Douglis-Nirenberg elliptic on $\RR^N$ if and only if both $T_b$ and $T_c$ are uniformly elliptic on $\RR^N$.
\end{lemma}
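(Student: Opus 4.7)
The plan is to exploit the fact that
\[
\det M(x,\xi) = a_{m}(x,\xi)\, d_{q}(x,\xi) - b_{n}(x,\xi)\, c_{p}(x,\xi)
\]
is a difference of two expressions that are (pointwise in $x$) homogeneous in $\xi$ of degrees $m+q$ and $n+p$, respectively. When $m+q \ne n+p$, exactly one of these terms dominates the other as $|\xi|\to\infty$, uniformly in $x\in\RR^N$, and the Douglis-Nirenberg estimate forces the dominant one to satisfy the matching lower bound.

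For part (i), set $\delta := (m+q)-(n+p) > 0$ and $\kappa = m+q$. For the forward direction, I would first use the symbol estimates for $a\in S_{cl}^m$, $b\in S_{cl}^n$, $c\in S_{cl}^p$, $d\in S_{cl}^q$ (applied to their principal parts) to obtain the uniform upper bounds
\[
|a_m(x,\xi)| \lesssim \langle\xi\rangle^{m}, \quad |d_q(x,\xi)| \lesssim \langle\xi\rangle^{q}, \quad |b_n(x,\xi)\,c_p(x,\xi)| \lesssim \langle\xi\rangle^{n+p} = \langle\xi\rangle^{\kappa - \delta}.
\]
Combining the last bound with Assumption~\ref{ass:A} via the reverse triangle inequality yields, for $|\xi|$ larger than some $x$-independent threshold,
\[
|a_m(x,\xi)\,d_q(x,\xi)| \geq |\det M(x,\xi)| - |b_n(x,\xi)\,c_p(x,\xi)| \gtrsim \langle\xi\rangle^{\kappa}.
\]
Dividing this by the upper bound on $|d_q|$ gives $|a_m(x,\xi)|\gtrsim \langle\xi\rangle^{m}$, and dividing by the upper bound on $|a_m|$ gives $|d_q(x,\xi)|\gtrsim \langle\xi\rangle^{q}$, i.e.\ uniform ellipticity of $T_a$ and $T_d$.

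For the backward direction of (i), assuming $|a_m|\gtrsim \langle\xi\rangle^m$ and $|d_q|\gtrsim\langle\xi\rangle^q$ for $|\xi|\gtrsim 1$, the product satisfies $|a_m d_q|\gtrsim \langle\xi\rangle^{m+q}$, while $|b_n c_p|\lesssim \langle\xi\rangle^{\kappa-\delta}$ is of strictly lower order. Hence, enlarging the threshold on $|\xi|$ if necessary so that $|b_n c_p|$ is at most half of the lower bound on $|a_m d_q|$, the triangle inequality gives $|\det M(x,\xi)|\gtrsim \langle\xi\rangle^{\kappa}$ uniformly in $x$, which is precisely Assumption~\ref{ass:A}. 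Part (ii) is proved by the identical argument with the roles of the diagonal and off-diagonal entries interchanged: now $\kappa=n+p$, the product $b_n c_p$ is the dominant term of $\det M$, and one extracts uniform ellipticity of $T_b$ and $T_c$ instead.

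The only point requiring care is the uniformity in $x\in\RR^N$ of the threshold $|\xi|\gtrsim 1$ beyond which the subdominant term can be absorbed; this is the main (though mild) obstacle, and it is resolved automatically because all hidden constants in the symbol-class estimates and in Assumption~\ref{ass:A} are by definition independent of $x$.
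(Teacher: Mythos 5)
Your proof is correct and follows essentially the same strategy as the paper's. Both exploit that when $m+q\neq n+p$ one of the two terms in $\det M = a_m d_q - b_n c_p$ dominates as $|\xi|\to\infty$ (uniformly in $x$), then absorb the subdominant term via a half-saving for $|\xi|$ large and extract the ellipticity of the individual entries from the resulting lower bound on the dominant product; the only cosmetic differences are that you derive a lower bound on the product $|a_m d_q|$ first and then peel off each factor (by chaining it with the upper bound on the other), whereas the paper runs the inequality chain the other way in the forward direction and factors out $|a_m|$ in the converse, and you are slightly more explicit than the paper about why the hidden constants and the threshold on $|\xi|$ are $x$-independent.
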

\begin{proof}
We prove claim (i); the proof of claim (ii) is completely analogous. It is easy to verify that
	\begin{equation}\label{key.ineq.japanese}
	\langle\xi\rangle^{m+q} \geq 2 \langle\xi\rangle^{n+p}, \quad |\xi|\gtrsim 1.
	\end{equation}
First let $T_0$ be uniformly Douglis-Nirenberg elliptic on $\RR^N\!$. Then, using the hypo\-theses $d_{q}\in S^{q}(\RR^N\!\times\RR^N)$, $b_{n}\in S^{n}(\RR^N\!\times\RR^N)$, $c_{p}\in S^{p}(\RR^N\!\times\RR^N)$ and \eqref{key.ineq.japanese}, we~get
	\begin{align*}
	\langle\xi\rangle^{m+q} 
	&\lesssim  |a_m(x,\xi)||d_q(x,\xi)|+|b_n(x,\xi)||c_p(x,\xi)| \\
	&\lesssim |a_m(x,\xi)|\langle\xi\rangle^q+\langle\xi\rangle^{n+p}
	\leq |a_m(x,\xi)|\langle\xi\rangle^q+\frac{1}{2}\langle\xi\rangle^{m+q}, \quad x\in\RR^N, \ |\xi|\gtrsim 1.
	\end{align*}  
\vspace{-2mm}Therefore,
	\begin{align}
	|a_m(x,\xi)|\gtrsim\frac{1}{2}\langle\xi\rangle^m, \quad x\in\RR^N, \ |\xi|\gtrsim 1.
	\end{align}
Since $a_{m}\in S^{m}(\RR^N\!\times\RR^N)$, the uniform ellipticity of $T_d$ follows in the same way.  
		
Now let $T_a$ and $T_d$ be uniformly elliptic on $\RR^N\!$. Then, using $b_{n}\in S^{n}(\RR^N\!\times\RR^N)$, $c_{p}\in S^{p}(\RR^N\!\times\RR^N)$ and the ellipticity of $T_a$ together with \eqref{key.ineq.japanese}, we obtain
	\begin{align*}
	\Bigl|\frac{b_n(x,\xi)c_p(x,\xi)}{a_m(x,\xi)}\Bigr| \lesssim \frac{\langle\xi\rangle^{n+p}}{\langle\xi\rangle^m}\leq \frac{1}{2}\langle\xi\rangle^q,\quad x\in\RR^N, \ |\xi|\gtrsim 1.
	\end{align*}
Therefore,
	\begin{align*}
	|a_m(x,\xi)d_q(x,\xi)-b_n(x,\xi)c_p(x,\xi)|&\geq |a_m(x,\xi)|\Bigg(|d_q(x,\xi)|-\Bigl|\frac{b_n(x,\xi)c_p(x,\xi)}{a_m(x,\xi)}\Bigr|\Bigg)\\[-1.1mm]
	&\gtrsim\langle\xi\rangle^m\Bigl(\langle\xi\rangle^q-\frac{1}{2}\langle\xi\rangle^q\Bigr)\\
	&=\frac{1}{2}\langle\xi\rangle^{m+q}, \quad x\in\RR^N, \ |\xi|\gtrsim1,
	\end{align*}
\nopagebreak
i.e.\ $T_0$ is uniformly Douglis-Nirenberg elliptic on $\RR^N$.
\end{proof}

Schur complements have proven to be useful tools in studying spectral properties of abstract operator matrices, see e.g.\ \cite[Section 2.2]{Tre08}. For $\la\in\CC\setminus\sigma(\ov{T_d})$, the (first) Schur complement of the operator matrix $T_0$ in \eqref{T0} is a pseudo-differential operator $S_1(\la):L^2(\RR^N)\to L^2(\RR^N)$, given by 
	\begin{align}\label{S_0}
	S_1(\la) := T_a -\la-T_b(T_d-\la)^{-1}T_c, \quad \Dom(S_1(\la)):=\sS(\RR^N).
	\end{align}
Note that 
	\begin{align}\label{symbol.class.S_0}
	S_1(\la)\in \Psi_{cl}^{\kappa-q}(\RR^N) = 
	          \begin{cases}
             \Psi_{cl}^{m}(\RR^N) \quad &\text{if} \quad m+q\geq n+p, \\[1.5ex]
             \Psi_{cl}^{n+p-q}(\RR^N) \quad &\text{if} \quad m+q< n+p.
            \end{cases} 
	\end{align}
Moreover, $S_1(\la)$ is closable since $\sS(\RR^N)$ is contained in $\Dom(S_1(\la)^*)$ and is dense in $L^2(\RR^N)$; we denote the closure of $S_1(\la)$ by $S(\la)$.

An important consequence of Assumption~\ref{ass:A} is the following result. In the proof we use Lemma~\ref{lem:equiv.charac} without mentioning it.

\begin{lemma}\label{lem:Schur.elliptic}
	Let Assumption~\ref{ass:A} be satisfied. Then $S(\la)$ is uniformly elliptic on~$\RR^N$ for every $\la\in\CC\setminus\sigma(\ov{T_d})$. 
\end{lemma}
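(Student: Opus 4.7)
The task is to show that the principal symbol $s_{\kappa-q}$ of the classical pseudo-differential operator $S(\la)\in\Psi^{\kappa-q}_{cl}(\RR^N)$, whose symbol class is recorded in \eqref{symbol.class.S_0}, satisfies $|s_{\kappa-q}(x,\xi)|\gtrsim\langle\xi\rangle^{\kappa-q}$ for $x\in\RR^N$ and $|\xi|\gtrsim 1$. My plan is to identify $s_{\kappa-q}$ via a formal symbol calculation on the Schur complement formula \eqref{S_0} and then to exploit Assumption~\ref{ass:A}.

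Since $q>0$, the scalar $\la$ is subordinate to $T_d$ and so does not enter the principal part of $T_d-\la$. Formally composing pseudo-differential symbols in \eqref{S_0} and replacing $(T_d-\la)^{-1}$ by a parametrix $P_\la\in\Psi^{-q}_{cl}$ with principal symbol $1/d_q$ (valid, modulo a smoothing operator that does not affect the principal symbol, whenever $T_d$ is uniformly elliptic), I would distinguish three regimes: when $m+q>n+p$ the contribution of $T_b(T_d-\la)^{-1}T_c$ is of strictly lower order $n+p-q<m$, giving $s_{\kappa-q}=a_m$; when $m+q=n+p$ one obtains $s_{\kappa-q}=a_m-b_nc_p/d_q$; when $m+q<n+p$ one obtains $s_{\kappa-q}=-b_nc_p/d_q$. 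The key unified observation, verified by direct substitution in each regime, is the identity
\begin{equation*}
s_{\kappa-q}(x,\xi)\,d_q(x,\xi) \;=\; (\det M)_\kappa(x,\xi),
\end{equation*}
where $(\det M)_\kappa$ denotes the top-order (degree $\kappa$) homogeneous component of $\det M=a_m d_q-b_nc_p$. This reflects the algebraic Schur identity $\det\begin{pmatrix}A&B\\C&D\end{pmatrix}=\det D\cdot\det(A-BD^{-1}C)$ at the principal symbol level.

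Given the identity, the ellipticity bound follows quickly. By Assumption~\ref{ass:A} one has $|\det M(x,\xi)|\gtrsim\langle\xi\rangle^\kappa$ for $|\xi|\gtrsim 1$. If $m+q\neq n+p$, the subdominant homogeneous component of $\det M$ is bounded in absolute value by $\langle\xi\rangle^{\min\{m+q,n+p\}}=\langle\xi\rangle^{\kappa-\delta}$ with $\delta=|m+q-(n+p)|>0$, so for $|\xi|$ sufficiently large it can be absorbed into the DN estimate to give $|(\det M)_\kappa(x,\xi)|\gtrsim\langle\xi\rangle^\kappa$; if $m+q=n+p$ then $(\det M)_\kappa=\det M$ and the bound is immediate. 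Combining this with $|d_q(x,\xi)|\lesssim\langle\xi\rangle^q$ (since $d_q\in S^q(\RR^N\!\times\RR^N)$), the displayed identity forces $|s_{\kappa-q}(x,\xi)|\gtrsim\langle\xi\rangle^{\kappa-q}$ for $|\xi|$ large, which is the uniform ellipticity of $S(\la)$.

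The main obstacle will be making the principal symbol computation rigorous in the cases $m+q\leq n+p$, where Lemma~\ref{lem:equiv.charac} does not grant uniform ellipticity of $T_d$; in such situations $(T_d-\la)^{-1}$ is only a bounded operator and not a classical pseudo-differential operator, so the symbol calculation for $T_b(T_d-\la)^{-1}T_c$ must be justified through the parametrix/generalized-inverse framework announced for Section~3 of the paper. Once that framework is in place, the identity $s_{\kappa-q}\,d_q=(\det M)_\kappa$ is established rigorously, and the quantitative estimate sketched above completes the proof.
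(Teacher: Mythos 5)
Your argument is correct, and it takes a genuinely different route from the paper. The paper identifies the same three-case formula for the principal symbol $\sigma_{\kappa-q}$ of $S(\la)$ and then treats the three regimes separately: for $m+q>n+p$ it invokes Lemma~\ref{lem:equiv.charac}\,(i) to get uniform ellipticity of $T_a$ and reads off $|\sigma_{\kappa-q}|=|a_m|\gtrsim\langle\xi\rangle^m$; for $m+q=n+p$ it uses $|\sigma_{\kappa-q}|=|\det M|/|d_q|$ directly; for $m+q<n+p$ it invokes Lemma~\ref{lem:equiv.charac}\,(ii) to get uniform ellipticity of $T_b$ and $T_c$. You instead observe the single algebraic identity $\sigma_{\kappa-q}\,d_q=(\det M)_\kappa$ valid in all three regimes, prove $|(\det M)_\kappa|\gtrsim\langle\xi\rangle^{\kappa}$ by absorbing the subdominant homogeneous component of $\det M$ (which is the same device as the paper's inequality \eqref{key.ineq.japanese}, used there inside the proof of Lemma~\ref{lem:equiv.charac}), and then divide by $|d_q|\lesssim\langle\xi\rangle^q$. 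This is cleaner and avoids going through Lemma~\ref{lem:equiv.charac} as an intermediary; the paper's route is slightly more transparent as to \emph{which} entries become elliptic in each regime, while yours exhibits the determinant structure behind the Schur-complement ellipticity in one stroke. Your closing caveat is well placed: in the cases $m+q\leq n+p$ the formula $\sigma_{\kappa-q}=a_m-b_nc_p/d_q$ (resp. $-b_nc_p/d_q$), and indeed the membership $S_1(\la)\in\Psi^{\kappa-q}_{cl}$ in \eqref{symbol.class.S_0}, presuppose that $(T_d-\la)^{-1}$ is a classical pseudo-differential operator of order $-q$, which in turn presupposes uniform ellipticity of $T_d$; this is an implicit hypothesis that the paper uses silently, and you are right to flag it.
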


\begin{proof}
It is not difficult to see that the principal symbol $\sigma_{\kappa-q}$ of $S(\la)$ is independent of $\la$. In fact, we have      
	\begin{align*}
	\sigma_{\kappa-q}(x,\xi)= 
	           \begin{cases}
             a_m(x,\xi) \quad &\text{if} \quad m+q>n+p,\\[2ex]
             \ds a_{m}(x,\xi)-\frac{b_{n}(x,\xi)c_p(x,\xi)}{d_{q}(x,\xi)}  &\text{if} \quad m+q=n+p,\\[2.5ex]     
             \ds -\frac{b_{n}(x,\xi)c_p(x,\xi)}{d_{q}(x,\xi)} \quad &\text{if} \quad m+q<n+p,
            \end{cases}  
	\quad \raisebox{1ex}{$(x,\xi)\in\RR^N\!\!\times\!\RR^N$\!.}																																																
	\end{align*}                          
Therefore, if $m+q>n+p$, then the uniform ellipticity of $T_a$ implies that 
	\[
	|\sigma_{\kappa-q}(x,\xi)|=|a_m(x,\xi)| \gtrsim \langle \xi \rangle^{m}, \qquad x\in\RR^N, \ |\xi|\gtrsim 1.
	\]
Now consider the case $m+q=n+p$. Since $T_0$ is uniformly Douglis-Nirenberg elliptic on $\RR^N$, we have  
	\begin{equation}\label{D.N.ellip.help}
	|\det M(x,\xi)| \gtrsim \langle \xi \rangle^{m+q}, \qquad x\in\RR^N, \ |\xi|\gtrsim 1. 
	\end{equation}
In view of $d_{q}\in S^{q}(\RR^N\!\times\RR^N)$, it immediately follows from \eqref{D.N.ellip.help} that   
	\begin{align*}
	|\sigma_{\kappa-q}(x,\xi)| = \frac{|\det M(x,\xi)|}{|d_{q}(x,\xi)|} \gtrsim \langle \xi \rangle^{m}, \qquad x\in\RR^N, \ |\xi| \gtrsim 1.
	\end{align*}
Finally, if $m+q<n+p$, then the ellipticity of the pseudo-differential operators $T_b$ and $T_c$ together with the assumption $d_{q}\in S^{q}(\RR^N\!\times\RR^N)$ yield
	\begin{align*}
	|\sigma_{\kappa-q}(x,\xi)| = \Bigl|\frac{b_{n}(x,\xi)c_p(x,\xi)}{d_{q}(x,\xi)}\Bigr| \gtrsim \frac{\langle \xi \rangle^{n}\langle \xi \rangle^{p}}{\langle \xi \rangle^{q}}= 
	\langle \xi \rangle^{n+p-q}, \qquad x\in\RR^N, \ |\xi|\gtrsim 1.
	\end{align*}
Altogether, we thus obtain
	\begin{align*}
	|\sigma_{\kappa-q}(x,\xi)| \gtrsim 
	     \begin{cases}
		    \langle \xi \rangle^{m} \quad &\text{if} \quad m+q \geq n+p,\\[2ex]
		    \langle \xi \rangle^{n+p-q}\quad &\text{if} \quad m+q<n+p,
		   \end{cases}
	\quad x\in\RR^N, \ |\xi|\gtrsim 1.		
	\end{align*}
Hence, in any case, $S(\la)$ is uniformly elliptic on $\RR^N$, see \eqref{symbol.class.S_0}. 
\end{proof}

Since, by Assumption~\ref{ass:A}, the pseudo-differential operator $S(\la)$ is uniformly elliptic 
for every fixed $\la\in\CC\setminus\sigma(\ov{T_d})$, it immediately follows that 
	\begin{align}\label{dom:S(la)}
	\Dom(S(\la))= H^{\kappa-q}(\RR^N) =  
	\begin{cases}
	H^{m}(\RR^N) \quad &\text{if} \quad m+q\geq n+p,\\ 
	H^{n+p-q}(\RR^N) \quad &\text{if} \quad m+q<n+p,
	\end{cases}
	\end{align}
see e.g.\ \cite[Chapter~14]{Wong-14b}. Furthermore, $S(\la)$ has a \emph{parametrix}, i.e.\ there exists an everywhere defined pseudo-differential operator $S_{\rm{p}}(\la)$ in $L^2(\RR^N)$ such that  
	\begin{align}\label{S.parametrix}
	 S_{\rm{p}}(\la)\in \Psi^{-\kappa+q}(\RR^N)=
	           \begin{cases}
                    \Psi^{-m}(\RR^N) \quad &\text{if} \quad m+q\geq n+p,\\
                    \Psi^{-n-p+q}(\RR^N) \quad &\text{if} \quad m+q<n+p,                 
                   \end{cases}
	\end{align}
and everywhere defined pseudo-differential operators $L_\la, R_\la\in\Psi^{-\infty}(\RR^N\!\times\RR^N)$ in $L^2(\RR^N)$ such that the identities  
	\begin{align}
	\label{parametrix-for-S}
	S_{\rm{p}}(\la) S(\la) = I+L_\la, \quad S(\la)S_{\rm{p}}(\la) = I+R_\la
	\end{align}
hold on $\Dom(S(\la))$ and $L^2(\RR^N)$, respectively, see e.g.\ \cite[Chapter~10]{Wong-14b}. Note that $S_{\rm{p}}(\la):L^2(\RR^N)\to L^2(\RR^N)$ is bounded by Theorem~\cite[Theorem~12.9]{Wong-14b} since $-\kappa+q \le -m \le 0$.

\section{Some auxiliary results}
Recall that an operator $A\in\mathscr{C}(X)$ is said to have a \textit{left approximate inverse} if there are a bounded operator $R_{\ell}:X\to X$ and a 
compact operator $K_X:X\to X$ such that $I_X+K_X$ extends $R_{\ell}A$, see e.g.\ \cite{EE87}. One can define \textit{right approximate inverses} in a similar way. An operator that is both a left and a right approximate inverse is referred to as a \textit{two-sided approximate inverse}.

\begin{remark}\label{rem:Schwartz.enough}
To check that $R_{\ell}$ is a left approximate inverse of $A$, it suffices to verify the equality $I_X+K_X=R_{\ell}A$ on any core of $A$. 

To see this, let $\cD_A\subset X$ be a core of $A\in\sC(X)$ and let $A_0$ be the restriction of $A$ to $\cD_A$. Let $x\in \Dom(A)$ be arbitrary. Then there is a sequence $\{x_k\}_{k=1}^\infty\subset \cD_A$ such that $x_k\to x$ and $A_0x_k\to Ax$ as $k\to\infty$ in $X$. Since $R_\ell$ and $I_X+K_X$ are bounded, it follows that 
	\[
	\ds R_\ell Ax=\lim_{k\to\infty}R_\ell A_0x_k=\lim_{k\to\infty}(I_X+K_X)x_k=(I_X+K_X)x.
	\]
\end{remark}

The proof of the following fact can be easily read off from the proofs of \cite[Theorems I.3.12-13]{EE87} and \cite[Lemma I.3.12]{EE87}.

\begin{proposition}\label{prop:left.apprx.inv}
	If $A\in\mathscr{C}(X)$ has a left approximate inverse, then $A$ has closed range and finite nullity $\nl\, A$.
\end{proposition}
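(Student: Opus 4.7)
The plan is to extract both conclusions from the defining identity $R_\ell A x = (I_X+K_X)x$, $x\in\Dom(A)$, by exploiting compactness of $K_X$ together with the closedness of $A$. Write $B := I_X+K_X$; by classical Riesz--Schauder theory $B$ is Fredholm on $X$, in particular $\nl\, B < \infty$ and $\Ran B$ is closed.

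\textbf{Finite nullity.} If $x\in\Ker A$ then $x\in\Dom(A)$ and $Bx = R_\ell A x = 0$, so $\Ker A \subseteq \Ker B$. Since $\dim\Ker B < \infty$, we conclude $\nl\, A \leq \nl\, B < \infty$.

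\textbf{Closed range.} Let $y_n := A x_n \to y$ with $x_n\in\Dom(A)$; we must produce $x\in\Dom(A)$ with $Ax=y$. Since $\Ker A$ is finite-dimensional (hence closed and topologically complemented), fix a closed complement $Z\subseteq X$ and let $P$ be the bounded projection onto $Z$ along $\Ker A$. Then $x_n - Px_n \in \Ker A\subseteq\Dom(A)$, so $Px_n\in\Dom(A)$ with $A(Px_n)=Ax_n\to y$. Replacing $x_n$ by $Px_n$, I may assume $x_n\in Z\cap\Dom(A)$ from the outset.

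The key step is to show that $\{x_n\}$ is bounded. Suppose not; after passing to a subsequence, $\|x_n\|\to\infty$. Set $u_n := x_n/\|x_n\|\in Z\cap\Dom(A)$. Then $A u_n \to 0$, so
\[
u_n + K_X u_n \,=\, R_\ell A u_n \,\longrightarrow\, 0.
\]
Because $\{u_n\}$ is bounded and $K_X$ is compact, a subsequence satisfies $K_X u_n\to w$ for some $w\in X$; consequently $u_n \to -w$. Since $Z$ is closed and $u_n\in Z$, we have $-w\in Z$; and since $A$ is closed and $A u_n\to 0$, we have $-w\in\Dom(A)$ with $A(-w)=0$, i.e.\ $-w\in\Ker A$. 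But $Z\cap\Ker A = \{0\}$, forcing $w=0$, which contradicts $\|u_n\|=1$. Hence $\{x_n\}$ is bounded.

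Once boundedness is secured, the argument closes quickly: by compactness of $K_X$ a subsequence has $K_X x_n \to z$, and then
\[
x_n \,=\, R_\ell A x_n - K_X x_n \,\longrightarrow\, R_\ell y - z \,=:\, x.
\]
Closedness of $A$ together with $Ax_n\to y$ yields $x\in\Dom(A)$ and $Ax = y$, proving $y\in\Ran A$.

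The genuinely delicate point is the boundedness step; the finite nullity and the final limiting argument are essentially formal consequences of Riesz--Schauder and of the compact/closed dichotomy, but boundedness requires the complementation of $\Ker A$ and the simultaneous use of compactness of $K_X$ and closedness of $A$ on sequences of unit vectors from $Z$.
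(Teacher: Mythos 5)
Your proof is correct. The paper itself gives no argument here --- it simply refers the reader to Theorems I.3.12--13 and Lemma I.3.12 of Edmunds--Evans --- so you were on your own, and you filled the gap with the standard argument one would expect those references to contain: reduce to a topological complement $Z$ of $\Ker A$ (available because $\nl\,A<\infty$, which you establish first via $\Ker A\subseteq\Ker(I+K_X)$ and Riesz--Schauder), prove boundedness of the preimage sequence by a normalization/contradiction argument using compactness of $K_X$ together with closedness of $A$ and $Z\cap\Ker A=\{0\}$, and then extract a convergent subsequence to land in $\Ran A$ by closedness of $A$. All the steps are sound: $Px_n\in\Dom(A)$ because $x_n-Px_n\in\Ker A\subseteq\Dom(A)$, and both uses of compactness of $K_X$ (to get $K_Xu_n$ and $K_Xx_n$ convergent along subsequences) are legitimate since $\{u_n\}$ and, after the boundedness step, $\{x_n\}$ are bounded. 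One remark: the defining identity $R_\ell A = I_X + K_X$ on $\Dom(A)$ gives directly the inequality $\|x\|\leq\|R_\ell\|\,\|Ax\|+\|K_Xx\|$ for $x\in\Dom(A)$, which is one of the classical equivalent characterizations of being in the class $\Phi_+$ (closed range, finite nullity); citing that characterization would shortcut your boundedness argument, and is very likely the route EE87 intends, but your direct normalized-sequence argument reproves exactly that characterization and is entirely correct.
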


Let $\la\in\CC\setminus\sigma(\ov{T_d})$ be such that $S(\la)$ is Fredholm and let $P_\la$ and $I-Q_\la$ be the orthogonal projections onto $\Ker(S(\la))$ and $\Ran(S(\la))$, respectively. Define $\wt{S}(\la)$ to be the restriction of $S(\la)$ to the subspace $\Dom(S(\la)) \cap \Ker(S(\la))^{\bot}$.

It is easy to see that the operator $S^\dagger(\la):L^2(\RR^N)\to L^2(\RR^N)$ given by
	\begin{align}\label{def:gen.inv}
	S^\dagger(\la) f:=\wt{S}(\la)^{-1}(I-Q_\la)f, \quad f\in L^2(\RR^N),
	\end{align}
is well-defined, bounded, and obeys the relations
	\begin{align}
	\label{def:gen.inv-1}
	S^\dagger(\la) S(\la) = I-P_{\la}, \quad  S(\la)S^\dagger(\la) = I-Q_{\la}
	\end{align}
on $\Dom(S(\la))$ and $L^2(\RR^N)$, respectively. The operator $S^\dagger(\la) :L^2(\RR^N)\to L^2(\RR^N)$ is called the \textit{generalized inverse} of $S(\la)$, 
see e.g.\ \cite{GGK-90b1}. Clearly, $S^\dagger(\la)$ is a two-sided approximate inverse of $S(\la)$ since $P_\la$ and $Q_\la$ have finite-rank. 

Note that we can not view $S_{\rm{p}}(\la)$ as a left approximate inverse of $S(\la)$ since  pseudo-differential operators of negative orders on $\RR^N$ are in general not compact in Sobolev spaces over $\RR^N$, see e.g.\ \cite{Agranovich90}. 
 
The following simple relationship between a parametrix $S_{\rm{p}}(\la)$ of $S(\la)$ and the generalized inverse $S^\dagger(\la)$ of $S(\la)$ will play a crucial role in this paper.

\begin{lemma}\label{lem:param-vs-apprx.inv}
	Let Assumption~\ref{ass:A} be satisfied and $\la\in\CC\setminus\sigma(\ov{T_d})$. Then, on $L^2(\RR^N)$,
	\begin{align}
	\label{rep:param-vs-apprx.inv}
	S^\dagger(\la)= S_{\rm{p}}(\la)(I-Q_\la) + L_\la S^\dagger(\la) R_\la - L_\la(I-P_\la) S_{\rm{p}}(\la).
	\end{align}
\end{lemma}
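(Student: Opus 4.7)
The plan is to verify the identity by a direct algebraic manipulation of the right-hand side, substituting the defining relations \eqref{parametrix-for-S} for the parametrix and \eqref{def:gen.inv-1} for the generalized inverse in an appropriate order so that the ``error terms'' $L_\la$ and $R_\la$ can be absorbed. The main delicacy is to make sure that each identity is applied on elements of the appropriate domain.

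More concretely, I would start from the right-hand side of \eqref{rep:param-vs-apprx.inv} and rewrite the first summand using the second relation of \eqref{def:gen.inv-1}, i.e.\ $I-Q_\la=S(\la)S^\dagger(\la)$ on $L^2(\RR^N)$, to get
\[
S_{\rm{p}}(\la)(I-Q_\la)=S_{\rm{p}}(\la)S(\la)S^\dagger(\la).
\]
This step is legitimate because $S^\dagger(\la) f\in\Dom(S(\la))$ for every $f\in L^2(\RR^N)$ by the definition \eqref{def:gen.inv} together with $\Dom(S(\la))\cap\Ker(S(\la))^\perp\subset\Dom(S(\la))$. Then the first identity in \eqref{parametrix-for-S}, $S_{\rm{p}}(\la)S(\la)=I+L_\la$ on $\Dom(S(\la))$, applies to $S^\dagger(\la)f$ and produces
\[
S_{\rm{p}}(\la)(I-Q_\la)=S^\dagger(\la)+L_\la S^\dagger(\la).
\]

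Substituting this into the right-hand side of \eqref{rep:param-vs-apprx.inv} gives
\[
S^\dagger(\la)+L_\la S^\dagger(\la)(I+R_\la)-L_\la(I-P_\la)S_{\rm{p}}(\la).
\]
Now use the second identity in \eqref{parametrix-for-S}, $I+R_\la=S(\la)S_{\rm{p}}(\la)$ on $L^2(\RR^N)$, to convert the middle term into $L_\la S^\dagger(\la)S(\la)S_{\rm{p}}(\la)$; here I need $S_{\rm{p}}(\la)$ to map $L^2(\RR^N)$ into $\Dom(S(\la))=H^{\kappa-q}(\RR^N)$, which follows from \eqref{S.parametrix} and standard mapping properties of pseudo-differential operators. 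Finally, the first relation of \eqref{def:gen.inv-1}, $S^\dagger(\la)S(\la)=I-P_\la$ on $\Dom(S(\la))$, yields $L_\la S^\dagger(\la)S(\la)S_{\rm{p}}(\la)=L_\la(I-P_\la)S_{\rm{p}}(\la)$, which cancels with the last term, leaving $S^\dagger(\la)$.

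The calculation itself is short; the only non-obvious point is verifying that each composition above is defined on all of $L^2(\RR^N)$ and that the algebraic rules \eqref{parametrix-for-S}, \eqref{def:gen.inv-1} are applied to vectors in the corresponding domain. This is the main (mild) obstacle and follows at every stage from $\Ran(S^\dagger(\la))\subset\Dom(S(\la))$ and from $S_{\rm{p}}(\la)\colon L^2(\RR^N)\to H^{\kappa-q}(\RR^N)=\Dom(S(\la))$ guaranteed by Lemma~\ref{lem:Schur.elliptic} and \eqref{dom:S(la)}, \eqref{S.parametrix}.
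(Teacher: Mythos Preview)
Your proof is correct and follows essentially the same route as the paper: substitute the relations \eqref{def:gen.inv-1} into the right-hand side of \eqref{rep:param-vs-apprx.inv} and then use the parametrix identities \eqref{parametrix-for-S} to collapse everything to $S^\dagger(\la)$. Your additional care about domains---checking that $\Ran(S^\dagger(\la))\subset\Dom(S(\la))$ and that $S_{\rm p}(\la)$ maps $L^2(\RR^N)$ into $H^{\kappa-q}(\RR^N)=\Dom(S(\la))$---is a welcome elaboration of what the paper leaves implicit.
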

\begin{proof}
Replacing the operators $I-P_\la$ and $I-Q_\la$ on the right-hand side of \eqref{rep:param-vs-apprx.inv} by the respective operators on the left-hand sides in \eqref{def:gen.inv-1}, and taking \eqref{parametrix-for-S} into account, one immediately verifies the claim.
\end{proof}

\begin{lemma}\label{lem:kernel-smooth}
	Let Assumption~\ref{ass:A} be satisfied and let $\la\in\CC\setminus\sigma(\ov{T_d})$. Then
	\begin{enumerate}[font=\upshape,label=\upshape(\roman*\upshape)]
	\item $(\phi_1, \phi_2)^t \!\in\! \Ker(T-\la)$ implies $\phi_1 \in \Ker(S(\la))$ and $\phi_2=-(T_d-\la)^{-1}T_c\phi_1$;
	\item $(\psi_1, \psi_2)^t \!\in\! \Ker(T^*-\ov\la)$ implies $\psi_1 \in \Ker(S(\la)^*)$ and $\psi_2=-  (T_d^*-\ov{\la})^{-1}T_b^*\psi_1$.
	\end{enumerate}
Furthermore, in either case, we have  
	\begin{align}\label{H-alpha-1}
	(\phi_1, \phi_2)^t \!\in\! \sS(\RR^N) \oplus \sS(\RR^N).
	\end{align}      
\end{lemma}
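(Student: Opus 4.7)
The plan is to derive (i) from an approximating-sequence argument combined with a Schur-complement manipulation, to deduce (ii) by applying the same reasoning to $T^*-\ov\la$, and to establish the Schwartz regularity \eqref{H-alpha-1} using the parametrix identities \eqref{parametrix-for-S} for $S(\la)$ and its adjoint.

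For (i), given $(\phi_1,\phi_2)^t\in\Ker(T-\la)$, I would invoke the definition of $T=\ov{T_0}$ to pick a sequence $(f_k,g_k)^t\in\sS(\RR^N)\oplus\sS(\RR^N)$ with $(f_k,g_k)\to(\phi_1,\phi_2)$ and $(T_0-\la)(f_k,g_k)\to 0$ in $L^2\oplus L^2$, i.e.\
\begin{equation*}
(T_a-\la)f_k+T_bg_k\to 0,\qquad T_cf_k+(T_d-\la)g_k\to 0\quad\text{in }L^2.
\end{equation*}
Since $\la\notin\sigma(\ov{T_d})$ and $(\ov{T_d}-\la)^{-1}(T_d-\la)g_k=g_k$ on $\sS$, applying the bounded operator $(\ov{T_d}-\la)^{-1}$ to the second relation gives $g_k+(\ov{T_d}-\la)^{-1}T_cf_k\to 0$ in $L^2$, whence $(\ov{T_d}-\la)^{-1}T_cf_k\to -\phi_2$. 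To reach $S(\la)\phi_1=0$, I would rewrite
\begin{equation*}
S_1(\la)f_k=\bigl[(T_a-\la)f_k+T_bg_k\bigr]-T_b(\ov{T_d}-\la)^{-1}\bigl[(T_d-\la)g_k+T_cf_k\bigr],
\end{equation*}
and use the elliptic regularity of $T_d$ (so that $(\ov{T_d}-\la)^{-1}\!:L^2\to H^q$ is bounded) together with the Douglis--Nirenberg order balance to conclude that $S_1(\la)f_k\to 0$ in $L^2$; the closedness of $S(\la)=\ov{S_1(\la)}$ then yields $\phi_1\in\Dom(S(\la))$ with $S(\la)\phi_1=0$, and the formula for $\phi_2$ follows from the earlier $L^2$-limit.

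Part (ii) is obtained by the same argument applied to $T^*-\ov\la$, noting that $T_0^*$ restricted to $\sS(\RR^N)\oplus\sS(\RR^N)$ has the matrix form $\left(\!\begin{smallmatrix}T_a^* & T_c^*\\ T_b^* & T_d^*\end{smallmatrix}\!\right)$, whose first Schur complement with respect to $T_d^*-\ov\la$ coincides on $\sS$ with $S_1(\la)^*=S(\la)^*$; the roles of $T_b$ and $T_c$ swap accordingly. For \eqref{H-alpha-1}, I would substitute $S(\la)\phi_1=0$ into the first identity of \eqref{parametrix-for-S} to obtain $\phi_1=-L_\la\phi_1$ with $L_\la\in\Psi^{-\infty}(\RR^N\!\times\RR^N)$, so that $\phi_1\in\sS(\RR^N)$ by the smoothing action of $L_\la$; the dual identity $S_{\rm p}(\la)^*S(\la)^*=I+R_\la^*$ gives $\psi_1\in\sS(\RR^N)$ analogously. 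The relation $\phi_2=-(\ov{T_d}-\la)^{-1}T_c\phi_1$, together with the Schwartz-stability of $(\ov{T_d}-\la)^{-1}$ coming from the uniform ellipticity of $T_d$, then yields $\phi_2\in\sS(\RR^N)$, and similarly $\psi_2\in\sS(\RR^N)$.

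The main technical obstacle will be the $L^2$-convergence $S_1(\la)f_k\to 0$ in the display above: when $n-q>0$ the composition $T_b(\ov{T_d}-\la)^{-1}$ is not $L^2$-bounded, so the control of the last bracket must carefully exploit the $H^q$-regularity produced by the elliptic regularity of $T_d$ against the $n$-th order action of $T_b$, using the order constraints imposed by Douglis--Nirenberg ellipticity.
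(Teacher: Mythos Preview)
Your approximating-sequence route diverges from the paper's proof and, as you yourself flag, runs into a genuine obstacle when $n>q$. The paper avoids this entirely by a duality argument: since $\Phi=(\phi_1,\phi_2)^t\in\Ker(T-\la)$, one has $\langle\Phi,(T_0^*-\ov\la)\Psi\rangle=0$ for every $\Psi=(\psi_1,\psi_2)^t\in\sS(\RR^N)\oplus\sS(\RR^N)$. Choosing $\psi_2=-(T_d^*-\ov\la)^{-1}T_b^*\psi_1$ collapses this to $\langle\phi_1,S_1(\la)^*\psi_1\rangle=0$ for all $\psi_1\in\sS(\RR^N)$, whence $\phi_1\in\Dom(S(\la))$ and $S(\la)\phi_1=0$; the choice $\psi_1=0$ then yields the formula for $\phi_2$. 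No boundedness of $T_b(\ov{T_d}-\la)^{-1}$ is ever required.

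Your proposed fix---``carefully exploit the $H^q$-regularity \ldots\ using the order constraints imposed by Douglis--Nirenberg ellipticity''---does not work in general: the only standing order constraint is $m\ge q>0$ from \eqref{orders}, which places no bound on $n-q$. In the paper's own application (Section~5) one has $n=3$, $q=1$, so $T_b(\ov{T_d}-\la)^{-1}\in\Psi^{2}(\RR\times\RR)$ is genuinely unbounded on $L^2(\RR)$; knowing that $(T_d-\la)g_k+T_cf_k\to0$ in $L^2$ then gives only $H^{-2}$-convergence of the last term in your display, which is not enough to invoke closedness of $S(\la)$ in $L^2$. Your regularity step via \eqref{parametrix-for-S} matches the paper's, but it presupposes~(i). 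A way to salvage your strategy would be to \emph{first} establish $\Phi\in\sS(\RR^N)\oplus\sS(\RR^N)$ from a matrix parametrix for the Douglis--Nirenberg elliptic system $T_0-\la$ and only then carry out the Schur algebra on Schwartz functions; as written, however, the argument has a gap.
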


\begin{proof}
We prove claim (i) and {\rm{\eqref{H-alpha-1}}}; claim (ii) can be shown in the same way. Let $\Phi \!:=\! (\phi_1, \phi_2)^t\in \Ker(T-\la)$ 
be arbitrary. Then, for all $\Psi\!:=\!(\psi_1,\psi_2)\in \sS(\RR^N)\oplus \sS(\RR^N)\subset\Dom(T_0^*)$, we have $0=\langle (T-\la)\Phi, \Psi\rangle=\langle\Phi,(T_0^*-\ov{\la})\Psi\rangle$ or, equivalently,
	\begin{align}\label{id:ker-1}
	\langle\phi_1, (T_a^*-\ov\la)\psi_1+T_c^*\psi_2\rangle + \langle\phi_2, T_b^*\psi_1+(T_d^*-\ov\la)\psi_2\rangle = 0;
	\end{align}
here we have used that  
	\begin{align*}
	T_0^* := \begin{pmatrix} T_a^* & T_c^* \\ T_b^* & T_d^* \end{pmatrix}  \quad \text{on} \quad \Dom(T_0)=\sS(\RR^N)\oplus \sS(\RR^N).
	\end{align*}
Choosing $\psi_2 = -(T_d^*-\ov{\la})^{-1}T_b^*\psi_1$ for $\psi_1\in \sS(\RR^N)$ in \eqref{id:ker-1}, we get $\langle\phi_1, S_1(\la)^*\psi_1\rangle\!=\!0$ for all $\psi_1\in \sS(\RR^N)$. Therefore, $\phi_1\in \Dom(S(\la))$ and $S(\la)\phi_1=0$. Consequently, using the first relation in \eqref{parametrix-for-S}, we find
	\[
	(I+L_\la\,)\phi_1=S_{\rm{p}}(\la) S(\la)\phi_1=0
	\]
or $\phi_1=-L_\la\,\phi_1$. Since $L_\la\,$ is a pseudo-differential operator with symbol from $S^{-\infty}(\RR^N\!\times\RR^N)$, it follows that $\phi_1\in H^{\alpha}(\RR^N)$ for every $\alpha\in\RR$, see e.g.\ \cite{Wong-14b}. Hence 
	\begin{equation}\label{phi1-is-schwartz}
	\phi_1\in\bigcap_{k\in\RR}H^{k}(\RR^N)=\sS(\RR^N). \vspace{-1.5mm}
	\end{equation}

On the other hand, choosing $\psi_1=0$ in \eqref{id:ker-1}, we obtain 
	\begin{align}\label{id:ker-2}
	\langle\phi_1, T_c^*\psi_2\rangle + \langle\phi_2, (T_d^*-\ov\la)\psi_2\rangle = 0
	\end{align}
for all $\psi_2\in \sS(\RR^N)$. In particular, for $\psi_2=(T_d^*-\ov{\la})^{-1}\varphi$ with arbitrary $\varphi\in \sS(\RR^N)$, 
	\begin{align}\label{id:ker-3}
	\langle\phi_1, T_c^*(T_d^*-\ov{\la})^{-1}\varphi\rangle + \langle\phi_2, \varphi\rangle = 0.
	\end{align}
Because $\phi_1 \in \sS(\RR^N)$ by \eqref{phi1-is-schwartz} and $(T_c^*(T_d^*-\ov{\la})^{-1})^*=(T_d-\la)^{-1}T_c$ on $\sS(\RR^N)$, we conclude from \eqref{id:ker-3} that $\langle (T_d-\la)^{-1}T_c\phi_1+\phi_2, \varphi\rangle=0$.
Since $\varphi\in \sS(\RR^N)$ was arbitrary, the density of $\sS(\RR^N)$ in $L^2(\RR^N)$ thus yields $\phi_2=-(T_d-\la)^{-1}T_c\phi_1$, completing the proof of claim (i). 

Now \eqref{H-alpha-1} immediately follows from \eqref{phi1-is-schwartz} since $(T_d-\la)^{-1}T_c$ is a pseudo-differential operator (of order $p-q$) and hence also $\phi_2=-(T_d-\la)^{-1}T_c\phi_1\in\sS(\RR^N)$.
\end{proof}

\section{Analytic description of the essential spectrum}

In this section we derive the characterization of the essential spectrum of the clo\-sure $T$ of the matrix pseudo-differential 
operator $T_0$ in \eqref{T0}. First we relate the Fredholm properties of $T-\la$ to those of the closure $S(\la)$ 
of its Schur complement.

\begin{theorem}\label{thm:approx.inv}
Let Assumption~\ref{ass:A} be satisfied and let $\la\in\CC\setminus\sigma(\ov{T_d})$. 
Further, assume $T-\la$ is Fredholm and let $T^\dagger(\la)$ be the generalized inverse of $\,T-\la$. Then the bounded operator $S_{\ell}(\la):L^2(\RR^N)\to L^2(\RR^N)$ defined by
	\begin{equation}\label{left.approx.inv.S}
	 S_{\ell}(\la)f:=P_1T^\dagger(\la)\binom{f}{0},\quad f\in \Dom(S_{\ell}(\la)):=L^2(\RR^N),
	\end{equation}
is a left approximate inverse of $S(\la)$, where $P_1:L^2(\RR^N)\oplus L^2(\RR^N)\to L^2(\RR^N)$ is the projection onto the first component. 
\end{theorem}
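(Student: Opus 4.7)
The plan is to verify the identity $S_{\ell}(\la)S(\la)\phi = (I+K)\phi$ on the core $\sS(\RR^N)$ of $S(\la)$, where $K$ is a finite-rank (hence compact) operator on $L^2(\RR^N)$; by Remark~\ref{rem:Schwartz.enough}, this will be enough.

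Fix $\phi\in\sS(\RR^N)$ and set $\psi := -(T_d-\la)^{-1}T_c\phi$ and $U := (\phi,\psi)^t$. Standard smoothing properties of the resolvent of the uniformly elliptic operator $T_d-\la$ on Schwartz inputs (using Lemma~\ref{lem:equiv.charac} when $m+q>n+p$, and the ellipticity of the Schur complement from Lemma~\ref{lem:Schur.elliptic} together with $\la\notin\sigma(\ov{T_d})$ in the remaining cases) yield $\psi\in H^{s}(\RR^N)$ for every $s\in\RR$. Approximating $\psi$ by Schwartz functions in the relevant Sobolev norms shows that $U\in\Dom(T)$, and a row-by-row computation produces
\[
(T-\la)U \;=\; \binom{(T_a-\la)\phi - T_b(T_d-\la)^{-1}T_c\phi}{0} \;=\; \binom{S(\la)\phi}{0}.
\]

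Using the defining identity $T^\dagger(\la)(T-\la)=I-P'_\la$ for the generalized inverse, where $P'_\la$ is the orthogonal projection onto the finite-dimensional subspace $\Ker(T-\la)$, we then have
\[
S_\ell(\la)S(\la)\phi \;=\; P_1 T^\dagger(\la)(T-\la)U \;=\; P_1(I-P'_\la)U \;=\; \phi - P_1 P'_\la U.
\]
Let $\{\Phi_j\}_{j=1}^{r}$ with $\Phi_j=(\phi_j^{(1)},\phi_j^{(2)})^t$ be an orthonormal basis of $\Ker(T-\la)$; by Lemma~\ref{lem:kernel-smooth}(i), each $\Phi_j\in\sS(\RR^N)\oplus\sS(\RR^N)$. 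A short adjoint manipulation gives
\[
\langle U,\Phi_j\rangle \;=\; \langle\phi,\phi_j^{(1)}\rangle + \langle\psi,\phi_j^{(2)}\rangle \;=\; \bigl\langle\phi,\,\phi_j^{(1)} + T_c^*(T_d^*-\ov\la)^{-1}\phi_j^{(2)}\bigr\rangle,
\]
the last step being legitimate since $(T_d^*-\ov\la)^{-1}\phi_j^{(2)}\in H^{\infty}(\RR^N)\subset\Dom(T_c^*)$. Setting $\eta_j := \phi_j^{(1)} + T_c^*(T_d^*-\ov\la)^{-1}\phi_j^{(2)}\in L^2(\RR^N)$, we obtain $P_1 P'_\la U = \sum_{j=1}^{r}\langle\phi,\eta_j\rangle\,\phi_j^{(1)}$. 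Hence $K\phi := -P_1 P'_\la U$ extends to a finite-rank, and in particular compact, operator on $L^2(\RR^N)$, completing the verification.

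The central obstacle is the first step: certifying that $U\in\Dom(T)$ and that the row computation is valid in the closed operator $T$, not merely formally on the pre-domain. Everything turns on $(T_d-\la)^{-1}$ lifting Schwartz data into $H^{\infty}(\RR^N)$ so that $\psi$ lies simultaneously in the Sobolev domains of both $T_b$ and $T_d$, and a single approximating Schwartz sequence realizes $U$ as a graph limit. After this, the rest is bookkeeping with the finite-rank projection $P'_\la$ combined with the regularity of kernel elements supplied by Lemma~\ref{lem:kernel-smooth}.
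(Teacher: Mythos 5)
Your argument is essentially the paper's proof: form $U=(\phi,\psi)^t$ with $\psi=-(T_d-\la)^{-1}T_c\phi$, compute $(T-\la)U=(S(\la)\phi,0)^t$, apply $T^\dagger(\la)$ and project onto the first component, then use Lemma~\ref{lem:kernel-smooth} to exhibit the remainder as a finite-rank operator and close with Remark~\ref{rem:Schwartz.enough}; the only blemish is a sign slip in $\eta_j$, which should read $\eta_j=\phi_j^{(1)}-T_c^*(T_d^*-\ov\la)^{-1}\phi_j^{(2)}$ since $\psi$ carries a minus sign (this does not affect the conclusion). Note also that the detour via $H^s$-approximation for $U\in\Dom(T)$ is unnecessary: the paper already records that $(T_d-\la)^{-1}T_c$ is a $\Psi$DO on $\sS(\RR^N)$, so $\psi\in\sS(\RR^N)$ and $U\in\Dom(T_0)$ directly.
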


\pagebreak
\begin{proof}
Take an arbitrary $u\in\sS(\RR^N)$ and define $v:=-(T_d-\la)^{-1}T_cu \in \sS(\RR^N)$. Then, by definition of $v$,  
	\begin{equation}\label{lemschur:<=1}
	(T -\la) 
	\begin{pmatrix}
	u \\ v
	\end{pmatrix}
	=
	\begin{pmatrix}
	S(\la)u \\ 0
	\end{pmatrix}.
	\end{equation}
Since $T^\dagger(\la)$ is the generalized inverse of $T-\la$, it follows that $T^\dagger(\la)(T-\la)=I-\wh P_\la$ where $\wh P_\la$ is the orthogonal projection onto $\Ker(T-\la)$, see \eqref{def:gen.inv-1}. Applying $T^\dagger(\la)$ to \eqref{lemschur:<=1} we find
	\begin{equation}\label{left.inv.T}
	\begin{pmatrix}
	u \\ v
	\end{pmatrix}
	-\wh P_\la
	\begin{pmatrix}
	u \\ v
	\end{pmatrix}
	=T^\dagger(\la)(T-\la)
	\begin{pmatrix}
	u \\ v
	\end{pmatrix}
	=T^\dagger(\la)
	\begin{pmatrix}
	S(\la)u \\ 0
	\end{pmatrix}.
	\end{equation}
Since $T-\la$ is Fredholm, we have $k:=\dim\Ker(T-\la)<\infty$.
Let $\{(f_j,g_j)^t\}_{j=1}^k$ be an orthonormal basis for $\Ker(T-\la)$. By \eqref{H-alpha-1}, it is clear that 
	\begin{align}\label{phi_j.in.L2}
	\varphi_j:=T_c^*(T_d^*-\ov{\la})^{-1}g_j-f_j\in\sS(\RR^N)\subset L^2(\RR^N), \quad j=1,2,\ldots,k.
	\end{align}
If we define the operator $\wh{K}(\la):L^2(\RR^N) \to L^2(\RR^N)$ as
	\begin{align*}
	\wh{K}(\la)f := \sum_{j=1}^k \langle f, \varphi_j\rangle f_j, \quad f\in\Dom(\wh{K}(\la))=L^2(\RR^N),
	\end{align*}
then \eqref{phi_j.in.L2} implies that $\wh{K}(\la):L^2(\RR^N) \to L^2(\RR^N)$ is compact. Moreover, it can be easily checked that 
	\begin{align}\label{whK.compact.2}
	\wh{K}(\la)f=P_1\wh P_\la
	\begin{pmatrix}
	-f \\[1ex]
	(T_d-\la)^{-1}T_cf
	\end{pmatrix}, \quad f\in\sS(\RR^N).
	\end{align}
Hence applying $P_1$ to \eqref{left.inv.T}, using \eqref{whK.compact.2} and \eqref{left.approx.inv.S}, we arrive at 
	\[
	u+\wh{K}(\la)u=S_{\ell}(\la)S(\la)u.
	\]
Since $u\in\sS(\RR^N)$ was arbitrary and $\sS(\RR^N)$ is a core of $S(\la)$, in view of Remark~\ref{rem:Schwartz.enough} it follows that $S_{\ell}(\la)$ is a left approximate inverse of $S(\la)$.
\end{proof}

Our basic assumptions imply that $(T_d-\la)^{-1}T_c$ and $T_b(T_d-\la)^{-1}$ are pseudo-differential operators in $L^2(\RR^N)$ defined on the Schwartz space $\sS(\RR^N)$ with symbols in $S^{p-q}(\RR^N\!\times\RR^N)$ and $S^{n-q}(\RR^N\!\times\RR^N)$, respectively. In the sequel, we work with the following extensions of these operators in $L^2(\RR^N)$:
	\begin{align}
	\label{def:F1}
	F_1(\la) &:= (T_d-\la)^{-1}T_c, \qquad \Dom(F_1(\la)) := H^{p-q}(\RR^N) \cap L^2(\RR^N), \\ \label{def:F2}
	F_2(\la) &:= T_b(T_d-\la)^{-1}, \qquad \Dom(F_2(\la)) :=  H^{n-q}(\RR^N) \cap L^2(\RR^N). 
	\end{align}

\begin{ass}\label{ass:B}
If the off-diagonal orders $n$ and $p$ have opposite sign, i.e.\ if $\min\{n,p\}<0<\max\{n,p\}$, let $m+q\ge \max\{n,p\}$. 
\end{ass}

\begin{remark}
(i) Assumption \ref{ass:B} is equivalent to $\kappa\!\ge\! \max\{n,p\}$ for $\kappa \!=\! \max\{m\!+\!q, $ $n\!+\!p\}$ in \vspace{0.5mm}Assumption \ref{ass:A}. 

\noindent
(ii) If $n$ and $p$ have the same sign, i.e.\ if $\min\{n,p\} \ge 0$ or $\max\{n,p\}\le 0$, no condition needs to be imposed.

\end{remark}
	
\begin{lemma}\label{lem:dom.inclusion.SFF}
	Let Assumptions~\ref{ass:A}, \ref{ass:B} be satisfied. Then, for every $\la\in\CC\setminus\sigma(\ov{T_d})$,
	\begin{equation}\label{dom.inclusion.SFF}
	\Dom(S(\la)) \subset \Dom(F_1(\la)) \cap \Dom(F_2(\la)).
	\end{equation}
\end{lemma}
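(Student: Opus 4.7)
The plan is to unfold the definitions, use the explicit formula $\Dom(S(\lambda)) = H^{\kappa-q}(\RR^N)$ from \eqref{dom:S(la)}, and then reduce the claim to the elementary Sobolev embedding $H^{s_1}(\RR^N) \subset H^{s_2}(\RR^N)$ for $s_1 \geq s_2$. Concretely, I would observe that $H^{p-q}(\RR^N) \cap L^2(\RR^N) = H^{\max\{0,\,p-q\}}(\RR^N)$ and likewise for $n$ in place of $p$, so the inclusion \eqref{dom.inclusion.SFF} is equivalent to the three scalar inequalities
\[
\kappa - q \geq 0, \qquad \kappa - q \geq p - q, \qquad \kappa - q \geq n - q,
\]
i.e.\ $\kappa \geq q$ and $\kappa \geq \max\{n,p\}$.

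The first inequality is immediate from $\kappa = \max\{m+q, n+p\} \geq m + q > q$, using $m > 0$ by \eqref{orders}. For the remaining two inequalities I would split according to the signs of the off-diagonal orders $n$, $p$. If $\min\{n,p\} \geq 0$, then $\kappa \geq n+p \geq \max\{n,p\}$. If $\max\{n,p\} \leq 0$, then $\max\{n,p\} \leq 0 < m+q \leq \kappa$. If $n$ and $p$ have opposite signs, Assumption~\ref{ass:B} gives directly $\kappa \geq m+q \geq \max\{n,p\}$.

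Putting the three cases together yields $\kappa \geq \max\{n,p,q\}$, hence the required Sobolev embeddings hold and \eqref{dom.inclusion.SFF} follows. The main thing to get right is the case analysis, and in particular the realization that Assumption~\ref{ass:B} is precisely what is needed to cover the mixed-sign situation; this was already flagged in the remark following Assumption~\ref{ass:B}, where the condition is restated as $\kappa \geq \max\{n,p\}$. Beyond this bookkeeping, no real obstacle is expected.
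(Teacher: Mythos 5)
Your proof is correct and follows essentially the same route as the paper: reduce the domain inclusion to the single inequality $\kappa\geq\max\{n,p\}$ (together with $\kappa\geq q$, which is automatic) and invoke Assumption~\ref{ass:B}. The only difference is that the paper simply cites Assumption~\ref{ass:B} (relying on the remark that it is equivalent to $\kappa\geq\max\{n,p\}$), whereas you spell out the sign-based case analysis verifying that equivalence; this is a useful bit of extra detail but not a different argument.
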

\begin{proof}
Let $\la\in\CC\setminus\sigma(\ov{T_d})$ be fixed. Note that 
	\begin{align}
	\ds\Dom(F_1(\la)) \cap \Dom(F_2(\la))=
									\begin{cases}
									H^{p-q}(\RR^N)\cap L^2(\RR^N) \quad &\text{if} \quad p\geq n,\\
									H^{n-q}(\RR^N)\cap L^2(\RR^N) \quad &\text{if} \quad p< n.
									\end{cases}
	\end{align}
By Assumption \ref{ass:B} 
on $m$, $n$, $p$, $q$, it follows that 
$\kappa \ge \max\{n,p\}$ and thus $\kappa-q \ge \max\{n-q,p-q\}$.
Hence, in view of \eqref{dom:S(la)}, we obtain \eqref{dom.inclusion.SFF}.
\end{proof}

\begin{theorem}\label{thm2:approx.inv}
	Let Assumptions~\ref{ass:A}, \ref{ass:B} be satisfied and let $\la\in\CC\setminus\sigma(\ov{T_d})$. Further, assume $S(\la)$ is Fredholm and let $S^\dagger(\la)$ be the generalized inverse of $S(\la)$. Then the operator $T_{\ell}(\la):L^2(\RR^N)\oplus L^2(\RR^N)\to L^2(\RR^N)\oplus L^2(\RR^N)$ defined to be the bounded extension to $L^2(\RR^N)\oplus L^2(\RR^N)$ of the operator matrix  
	\begin{align}\label{left.approx.inv.op.mat}
	&T^{\#}(\la) := \begin{pmatrix}
	S^\dagger(\la)\, & \quad -S^\dagger(\la)\,F_2(\la)\\[1ex]
	-F_1(\la)S^\dagger(\la)\, & \quad (T_d-\la)^{-1} + F_1(\la)S^\dagger(\la)\,F_2(\la)
	\end{pmatrix},\\[1ex]
	&\Dom(T^{\#}(\la)):= L^2(\RR^N)\oplus \Dom(F_2(\la)), 
	\end{align}
is a left approximate inverse of $\,T-\la$, where the operators $F_1(\la)$ and $F_2(\la)$ are defined as in \eqref{def:F1}-\eqref{def:F2}. 
\end{theorem}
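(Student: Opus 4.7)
The plan follows the template of the proof of Theorem~\ref{thm:approx.inv}: first check that $T^{\#}(\la)$ extends to a bounded operator $T_\ell(\la)$ on $L^2(\RR^N)\oplus L^2(\RR^N)$, then establish the left approximate inverse identity on the core $\sS(\RR^N)\oplus\sS(\RR^N)$ of $T-\la$ and invoke Remark~\ref{rem:Schwartz.enough}.

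For the boundedness step, the key tool is Lemma~\ref{lem:param-vs-apprx.inv}, which allows us to replace $S^\dagger(\la)$ in each mixed entry of $T^{\#}(\la)$ by the parametrix $S_{\rm p}(\la)$ modulo smoothing and finite-rank corrections. Up to these harmless terms, $S^\dagger(\la)F_2(\la)$, $F_1(\la)S^\dagger(\la)$, and $F_1(\la)S^\dagger(\la)F_2(\la)$ reduce to products of pseudo-differential operators of orders $n-\kappa$, $p-\kappa$, and $n+p-\kappa-q$, respectively. Assumption~\ref{ass:B} guarantees $\kappa\ge\max\{n,p\}$, while $\kappa\ge n+p$ holds by definition, so all three orders are $\le 0$; \cite[Theorem~12.9]{Wong-14b} then yields $L^2$-boundedness. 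Combined with the boundedness of $S^\dagger(\la)$ and $(T_d-\la)^{-1}$, this produces the extension $T_\ell(\la)$.

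For the identity, I would take $(u,v)^t\in\sS(\RR^N)\oplus\sS(\RR^N)$ and use the relations $F_2(\la)(T_d-\la)v=T_b v$ and $(T_d-\la)^{-1}(T_d-\la)v=v$ that hold on $\sS(\RR^N)$. The first component of $T^{\#}(\la)(T-\la)(u,v)^t$ then collapses to
\[
S^\dagger(\la)\bigl[(T_a-\la)u-F_2(\la)T_c u\bigr]=S^\dagger(\la)S(\la)u=(I-P_\la)u,
\]
by \eqref{def:gen.inv-1}, with $P_\la$ the finite-rank orthogonal projection onto $\Ker(S(\la))$. An analogous expansion of the second component, in which the two $F_1(\la)S^\dagger(\la)T_b v$ contributions cancel, gives $v+F_1(\la)P_\la u$. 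Altogether,
\[
T^{\#}(\la)(T-\la)\binom{u}{v}=\binom{u}{v}+\binom{-P_\la u}{F_1(\la)P_\la u}=:\binom{u}{v}+K(\la)\binom{u}{v}.
\]
Elements of $\Ker(S(\la))$ lie in $\sS(\RR^N)$ by elliptic regularity, applied as in the proof of Lemma~\ref{lem:kernel-smooth} via $S_{\rm p}(\la)S(\la)=I+L_\la$; consequently, $F_1(\la)P_\la$ is a bounded finite-rank operator on $L^2(\RR^N)$, and $K(\la)$ is of finite rank on $L^2(\RR^N)\oplus L^2(\RR^N)$. Remark~\ref{rem:Schwartz.enough} then lifts the identity $I+K(\la)=T_\ell(\la)(T-\la)$ from the core to all of $\Dom(T)$, so that $T_\ell(\la)$ is a left approximate inverse of $T-\la$.

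I expect the main obstacle to lie in the first step: the pseudo-differential calculus must be coordinated with the parametrix remainders so that Assumption~\ref{ass:B} genuinely produces non-positive-order symbols in every mixed composition, and the finite-rank projections $P_\la$, $Q_\la$ arising from Lemma~\ref{lem:param-vs-apprx.inv} must be handled separately. The algebraic identity of the second step, once $(T-\la)(u,v)^t$ is unfolded and the domain conditions are in place, is essentially forced by the Schur-complement structure.
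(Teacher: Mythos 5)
Your proposal is correct and follows essentially the same route as the paper: establishing the algebraic identity $T^{\#}(\la)(T_0-\la)=I+K(\la)$ on the Schwartz core (where the compact correction $K(\la)$ is built from the finite-rank projection $P_\la$ and $F_1(\la)P_\la$), and proving boundedness of $T^{\#}(\la)$ by splitting off the parametrix via Lemma~\ref{lem:param-vs-apprx.inv} and verifying the resulting pseudo-differential orders $n-\kappa$, $p-\kappa$, $n+p-\kappa-q$ are non-positive under Assumption~\ref{ass:B}. The only cosmetic difference is that you compute $T^{\#}(\la)(T_0-\la)(u,v)^t$ directly while the paper solves for $(u,v)^t$ from $(f,g)^t=(T_0-\la)(u,v)^t$; these are the same algebra in a different arrangement.
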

\begin{proof}
First of all, we note that the operator $T^{\#}(\la)$ is well-defined since $\Ran(S^\dagger(\la))$ is a subset of $\Dom(S(\la))$ and $\Dom(S(\la))\subset\Dom(F_1(\la))$ by \eqref{dom.inclusion.SFF}. Further, for $(f,g)^{t} \in \Ran(T_{0}-\la)\subset\sS(\RR^N)\oplus \sS(\RR^N)$, there exists $(u,v)^{t}\in\Dom(T_{0})=\sS(\RR^N)\oplus\sS(\RR^N)$ such that
	\begin{align}\label{eshmat1}
	\begin{pmatrix}
	f\\g
	\end{pmatrix}
	=(T_{0}-\la) \begin{pmatrix}
	u\\v
	\end{pmatrix}
	\quad \Longleftrightarrow \quad 
	\begin{matrix}
	(T_a-\la)u+T_bv=f,\\
	T_cu+(T_d-\la)v=g,
	\end{matrix}
	\end{align}
which implies that  
	\begin{align}\label{Schur-u}	
	S_1(\la)u = f-T_b(T_d-\la)^{-1}g.
	\end{align}
Multiplying \eqref{Schur-u} from the left by $S^\dagger(\la)$ and using the first relation in \eqref{def:gen.inv-1}, we~find
	\begin{align*}
	u=S^\dagger(\la)f-S^\dagger(\la)F_2(\la)g+P_{\la}u,
	\end{align*}
where $P_\la$ is the orthogonal projection in $L^2(\RR^N)$ onto $\Ker(S(\la))$. Since $S(\la)$ is Fredholm by assumption, $P_\la$ has finite rank and is thus compact. Inserting this representation of $u$ into the second equation in \eqref{eshmat1} and solving for $v$, we find
	\begin{align*}
	v= (T_d-\la)^{-1}g-F_1(\la)S^\dagger(\la)f+F_1(\la)S^\dagger(\la)F_2(\la)g-F_1(\la)
	P_\la u.
	\end{align*}
Therefore, by definition of $T^{\#}(\la)$ in \eqref{left.approx.inv.op.mat},
	\begin{align}\label{apprx.inv-1}
	\begin{pmatrix}
	u\\v
	\end{pmatrix}
	=T^{\#}(\la) \begin{pmatrix}
	f\\g
	\end{pmatrix}
	-K(\la)  \begin{pmatrix}
	u\\v
	\end{pmatrix}
	\end{align}
with the operator matrix $K(\la)$ in $L^2(\RR^N) \oplus L^2(\RR^N)$ defined by 
	\begin{align}
	&K(\la) := \begin{pmatrix}
	-P_{\la} & \: 0\\[1ex]
	F_1(\la)P_{\la} & \: 0
	\end{pmatrix}, \quad \Dom(K(\la)):=L^2(\RR^N)\oplus L^2(\RR^N).
	\end{align}
Since $P_\la:L^2(\RR^N)\to L^2(\RR^N)$ is everywhere defined and has finite-rank with $\Ran(P_\la)=\Ker(S(\la)) \subset \Dom(F_1(\la))$, it follows that $F_1(\la)P_\la$ is a compact operator in $L^2(\RR^N)$. By \eqref{apprx.inv-1} and \eqref{eshmat1}, we have
	\begin{equation*}
	T^{\#}(\la)(T_0-\la) = I+K(\la) \quad \text{on} \quad \Dom(T_0)=\sS(\RR^N)\oplus\sS(\RR^N).
	\end{equation*} 
Since $\sS(\RR^N)\oplus \sS(\RR^N)$ is a core of $T-\la$, it is thus left to be shown that $T^{\#}(\la)$ has a bounded extension to $L^2(\RR^N)\oplus L^2(\RR^N)$, see Remark~ \ref{rem:Schwartz.enough}.

Using relation \eqref{rep:param-vs-apprx.inv} in  Lemma~\ref{lem:param-vs-apprx.inv}, we decompose $T^{\#}(\la)=T_1^{\#}(\la)+T_2^{\#}(\la)$ where
	\begin{align}
	T_1^{\#}(\la) :=& \begin{pmatrix}
	S_{\rm{p}}(\la) & \quad -S_{\rm{p}}(\la)F_2(\la)\\[1ex]
	-F_1(\la)S_{\rm{p}}(\la) & \quad (T_d-\la)^{-1} + F_1(\la)S_{\rm{p}}(\la)F_2(\la)
	\end{pmatrix}
	\intertext{and}
	T_2^{\#}(\la) :=& \begin{pmatrix}
	H(\la) & \quad -H(\la)F_2(\la)\\[1ex]
	-F_1(\la)H(\la) & \quad  F_1(\la)H(\la)F_2(\la)
	\end{pmatrix}
	\end{align}
with 
	\[
	\Dom(T_1^{\#}(\la))=\Dom(T_2^{\#}(\la))=L^2(\RR^N)\oplus \Dom(F_2(\la)) 
	\]
and
	\begin{align*}
	H(\la) := L_{\la}S^{\dagger}(\la)R_{\la}-L_{\la}(I-P_{\la})S_{\rm{p}}(\la)-S_{\rm{p}}(\la)Q_{\la}.
	\end{align*}
First we show that $T_1^{\#}(\la)$ has a bounded extension to $L^2(\RR^N)\oplus L^2(\RR^N)$. 
To this end, we recall that $S_{\rm{p}}(\la)$ is bounded in $L^2(\RR^N)$ by \eqref{S.parametrix} since $-\kappa+q \le -m< 0$, see \cite[Theorem 12.9]{Wong-14b}.
Further, we note that, by \cite[Theorem 8.1]{Wong-14b}, we have
	\begin{align}
	\label{symb.class.F2Sp}
	S_{\rm{p}}(\la) F_2(\la) \in \Psi^{-\kappa+n}(\RR^N) & \!=\!
	\begin{cases}
	\Psi^{-m+n-q}(\RR^N) \ &\text{if} \  m+q\geq n+p,\\
	\Psi^{-p}(\RR^N) \quad &\text{if} \ m+q<n+p,
	\end{cases}
	\\ \label{symb.class.F1Sp}
	F_1(\la)S_{\rm{p}}(\la)  \in \Psi^{-\kappa+p}(\RR^N) & \!=\!
	\begin{cases}
	\Psi^{p-q-m}(\RR^N) \quad &\text{if} \ m+q\geq n+p,\\
	\Psi^{-n}(\RR^N) \quad &\text{if} \ m+q<n+p,
	\end{cases}\\
\intertext{and}
  \label{symb.class.F1F2Sp}
	\hspace{-3mm}
	F_1(\la) S_{\rm{p}}(\la) F_2(\la) \in \Psi^{-\kappa+n+p-q}(\RR^N) & \!=\!
	\begin{cases}
	\Psi^{p-m+n-2q}(\RR^N) \!\!\!&\text{if} \ m+q\geq n+p,\\
	\Psi^{-q}(\RR^N) \quad &\text{if} \ m+q<n+p.
	\end{cases}
	\hspace{-3mm}
	\end{align}
By Assumption \ref{ass:B} on $m$, $n$, $p$, $q$, we have $\kappa \ge \max\{n,p\}$ which ensures that 
$S_{\rm{p}}(\la) F_2(\la)$ has a bounded extension to $L^2(\RR^N)$ and $F_1(\la)S_{\rm{p}}(\la)$ is bounded in $L^2(\RR^N)$. By definition, $\kappa=\max\{m+q,n+p\}$ so that $\kappa-(n+p)+q \ge q >0$ and hence 
$F_1(\la) S_{\rm{p}}(\la) F_2(\la)$ has a bounded extension to $L^2(\RR^N)$. Altogether, all entries of $T_1^{\#}(\la)$ have bounded extensions to or are bounded in $L^2(\RR^N)$.
	
The existence of a bounded extension of $T_2^{\#}(\la)$ to $L^2(\RR^N)\oplus L^2(\RR^N)$ can be shown similarly. Indeed, it is easy to see that $H(\la)$ has a bounded extension to 
$L^2(\RR^N)$. Furthermore, since $F_1(\la)L_\la\in\Psi^{-\infty}(\RR^N)$ and \eqref{symb.class.F1Sp} holds, it follows that 
$F_1(\la)H(\la)$ is a bounded operator on $L^2(\RR^N)$. In the same way, it follows that $H(\la)F_2(\la)$ has a bounded extension to $L^2(\RR^N)$. 
Finally, by the above observations and noting that $R_\la\,F_2(\la)\in\Psi^{-\infty}(\RR^N)$, we conclude that $F_1(\la)H(\la)F_2(\la)$ has a bounded 
extension to $L^2(\RR^N)$. Thus, again by \cite[Theorem 12.9]{Wong-14b}, it follows that $T_2^{\#}(\la)$ has a bounded extension to $L^2(\RR^N)\oplus L^2(\RR^N)$, and hence so does $T^{\#}(\la)$.
\end{proof}

The following result is one of the key ingredients for the proof of our main result.

\begin{proposition}\label{thm:Schur-complement}
	Let Assumptions~\ref{ass:A}, \ref{ass:B} be satisfied. Then, for $\la\in\CC\setminus\sigma(\ov{T_d})$,
	\[
	T-\la \; \text{is Fredholm} \quad \Longleftrightarrow \quad S(\la) \; \text{is Fredholm,}
	\] 
	and in this case $T-\la$ and $S(\la)$ have the same defect, $\df(T-\la)=\df(S(\la))$.
\end{proposition}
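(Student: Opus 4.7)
The plan is to combine the two ``left approximate inverse'' constructions (Theorems~\ref{thm:approx.inv} and \ref{thm2:approx.inv}) with Proposition~\ref{prop:left.apprx.inv} and the kernel correspondence implicit in Lemma~\ref{lem:kernel-smooth} to get the equivalence, and then to read off the defect equality from the adjoint version of the kernel correspondence.

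First I would promote Lemma~\ref{lem:kernel-smooth} to a dimension statement. The map $(\phi_1,\phi_2)^t\mapsto \phi_1$ from $\Ker(T-\la)$ to $\Ker(S(\la))$ is well-defined and injective by part~(i), and I would check surjectivity directly: given $\phi_1\in\Ker(S(\la))$ (which by the proof of the lemma is automatically Schwartz), setting $\phi_2:=-(T_d-\la)^{-1}T_c\phi_1\in\sS(\RR^N)$ yields $(\phi_1,\phi_2)^t\in\sS(\RR^N)\oplus\sS(\RR^N)=\Dom(T_0)\subset\Dom(T)$ and a one-line computation shows $(T_0-\la)(\phi_1,\phi_2)^t=(S_1(\la)\phi_1,0)^t=0$. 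Thus $\nl(T-\la)=\nl(S(\la))$. The same argument with part~(ii), applied to the formal adjoint $T_0^*$ whose closure is $T^*$, gives $\nl(T^*-\ov\la)=\nl(S(\la)^*)$.

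Next I would run the forward and backward implications symmetrically. Suppose $T-\la$ is Fredholm. Theorem~\ref{thm:approx.inv} produces a left approximate inverse of $S(\la)$, so Proposition~\ref{prop:left.apprx.inv} yields that $\Ran(S(\la))$ is closed and $\nl(S(\la))<\infty$. Since $S(\la)$ is closed and densely defined with closed range, $\df(S(\la))=\nl(S(\la)^*)$, which by the kernel correspondence of the previous paragraph equals $\nl(T^*-\ov\la)=\df(T-\la)<\infty$. Hence $S(\la)$ is Fredholm. Conversely, if $S(\la)$ is Fredholm, Theorem~\ref{thm2:approx.inv} provides a left approximate inverse of $T-\la$, and Proposition~\ref{prop:left.apprx.inv} gives closed range and finite nullity of $T-\la$; the same adjoint trick then yields $\df(T-\la)=\nl(T^*-\ov\la)=\nl(S(\la)^*)=\df(S(\la))<\infty$, so $T-\la$ is Fredholm. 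The defect equality $\df(T-\la)=\df(S(\la))$ is automatic from this last chain.

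The main obstacle I anticipate is justifying the adjoint kernel identification cleanly: one must be sure that the closure of $T_0$ has adjoint equal to the closure of $T_0^*$ on the diagonal-swap matrix (which is ensured by $\sS(\RR^N)\oplus\sS(\RR^N)\subset\Dom(T_0^*)$, already noted in Section~\ref{sec:A.def}), and that the surjectivity half of the kernel bijection really lands inside the correct domain. Once those two small technicalities are settled, everything else is bookkeeping with the results already established.
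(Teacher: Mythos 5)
Your proposal is correct and follows essentially the same route as the paper: both directions are driven by Theorems~\ref{thm:approx.inv} and \ref{thm2:approx.inv} feeding into Proposition~\ref{prop:left.apprx.inv} for closed range and finite nullity, and the defect equality comes from the kernel correspondence of Lemma~\ref{lem:kernel-smooth}(ii) combined with $\df(A)=\nl(A^*)$ for a densely defined closed operator with closed range. The only cosmetic difference is that you package the adjoint kernel identification as an unconditional bijection $\Ker(T^*-\ov\la)\cong\Ker(S(\la)^*)$ (which is indeed unconditional, since the lift only needs the parametrix regularity, not Fredholmness), whereas the paper records the same content via a linear-independence/counting argument performed separately under each Fredholm hypothesis.
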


\begin{proof}
First let $T-\la$ be Fredholm. Then $T-\la$ has a two-sided approximate inverse, see \cite[Theorem~I.3.15 and Remark~I.3.27]{EE87} and thus, in particular, a left approximate inverse. Hence Proposition~\ref{thm:approx.inv} implies that $S(\la)$ also has a left approximate inverse and so, by Proposition~\ref{prop:left.apprx.inv}, $S(\la)$ has closed range and finite nullity.

On the other hand, we have $k:=\df(T-\la)<\infty$ since $T-\la$ is Fredholm. Let $\{\phi_j\}^k_{j=1} = \{(\phi_{j,1}, \phi_{j,2})^t\}_{j=1}^k$ be an orthonormal basis of $\Ran(T-\la)^\bot=\Ker(T^*-\ov\la)$. By Lemma~\ref{lem:kernel-smooth} (ii), $\phi_{j,1}\in \Ker(S(\la)^*)$ and $\phi_{j,2}=-(T_d^*-\ov{\la})^{-1}T_b^*\phi_{j,1}$ for all $j\in\{1,\ldots,k\}$. 
If $\sum_{j=1}^kc_j\phi_{j,1}=0$ for some constants $c_1,\ldots,c_k$, then 
	\begin{align*}
	\sum_{1\leq j \leq k}c_j\phi_{j,2}=-\sum_{1\leq j \leq k} c_j(T_d^*-\ov{\la})^{-1}T_b^*\phi_{j,1}=-(T_d^*-\ov{\la})^{-1}T_b^*\Bigl(\sum_{1\leq j \leq k} c_j\phi_{j,1}\Bigr)=0
	\end{align*}
and hence $\sum_{j=1}^kc_j\phi_j=0$. Because $\{\phi_j\}^k_{j=1}$ is a basis, it follows that $c_j=0$ for all $j\in\{1,\ldots,k\}$, i.e.\ $\{\phi_{j,1}\}_{j=1}^k$ are linearly independent. Hence $\dim\Ker(S(\la)^*)\geq k$.
If $\dim\Ker(S(\la)^*) \geq k+1$, then there would exist $\phi_{0,1}\in\Ker(S(\la)^*)$ such that $\{\phi_{j,1}\}^k_{j=0}$ are linearly independent. Since $\phi_{0,1}\in\Ker(S(\la)^*)$, it would then follow that $\phi_{0,1}\in \sS(\RR^N)$, $\phi_0:=(\phi_{0,1}, -(T_d^*-\ov{\la})^{-1}T_b^*\phi_{0,1})^t \in \Ker\,(T^*-\ov{\la})$, and $\{\phi_j\}^k_{j=0}$ would be linearly independent, contradicting the assumption that $\df(T-\la)=k$. Therefore, $\df(S(\la))=\dim\Ker(S(\la)^*) = k$ and $S(\la)$ is Fredholm.

\smallskip

Now let $S(\la)$ be Fredholm. Then $T-\la$ has a left approximate inverse, see Theorem~\ref{thm2:approx.inv}. By Proposition~\ref{prop:left.apprx.inv}, $T-\la$ has closed range and finite nullity. On the other hand, $k:=\df(S(\la))<\infty$ since $S(\la)$ is Fredholm. Let $\{\phi_{j,1}\}_{j=1}^k$ be an orthonormal basis of $\Ran(S(\la))^\bot = \Ker(S(\la)^*)$. Then the vectors $\{\phi_j\}^k_{j=1}\in \Ker\:(T^*-\ov{\la})$ given by $\phi_j:=(\phi_{j,1}, \phi_{j,2})^t$ with $\phi_{j,2}:=-(T_d^*-\ov{\la})^{-1}T_b^*\phi_{j,1}$ must be linearly independent, for otherwise $\{\phi_{j,1}\}_{j=1}^k$ would be linearly dependent, 
contradicting the assumption that $\df(S(\la))=k < \infty$. Hence $\dim\Ker(T^*-\ov{\la}) \geq k$ holds. If $\dim\Ker(T^*-\ov{\la}) \geq k+1$, there would exist a vector $\phi_0:=(\phi_{0,1},\phi_{0,2})^t\in \Ker(T^*-\ov{\la})$ such that $\{\phi_j\}^k_{j=0}$ are linearly independent. By Lemma~\ref{lem:kernel-smooth}, $\phi_{0,1} \in \Ker(S(\la)^*)$ and $\phi_{0,2} = -(T_d^*-\ov{\la})^{-1}T_b^*\phi_{0,1}$. This would imply that the vectors $\{\phi_{j,1}\}_{j=0}^{k}$ are linearly independent and hence $\dim \Ker(S(\la)^*) \geq k+1$. This contradiction yields that $\df(T-\la)=\dim\Ker(T^*-\ov{\la})=k$ and $T-\la$ is Fredholm.	
\end{proof}


Recall that the Grushin symbol class $S_0^k(\RR^N\!\times \RR^N)$, $k\in\RR$, 
consists of those symbols $\sigma \in S^k(\RR^N\!\times \RR^N)$ such that, for all multi-indices $\alpha, \beta\in \NN_0^N$, there is a positive function $x\mapsto C_{\alpha,\beta}(x)$, $x\in\RR^N$, satisfying 
	\[
	|(\partial^\beta_x\partial^\alpha_\xi\sigma)(x,\xi)| \leq C_{\alpha,\beta}(x)\langle \xi \rangle^{k-|\alpha|}, \quad 
	(x,\xi) \in \RR^N\!\times\RR^N,
	\]
and $\lim\limits_{|x|\to\infty}C_{\alpha,\beta}(x)=0$ if $\beta\neq0$, see \cite{Grushin-1970}.

\begin{ass}\label{ass:C}
	Suppose that the symbol $\sigma_\la^{1}$ of the first Schur complement $S_{1}(\la)$ satisfies $\sigma_\la^{1}\in S_0^{\kappa-q}(\RR^N\!\times\RR^N)$ for all $\la\in\CC\setminus\sigma(\ov{T_d})$, where $\kappa=\max\{m+q,n+p\}$, and there exists a limiting symbol $\sigma_{\la,\infty}^{1}\in S^{\kappa-q}(\RR^N\!\times\RR^N)$ such that $\sigma_{\la,\infty}^{1}$ is independent of $x$, i.e.\ $\sigma_{\la,\infty}^{1}(x,\xi)=\sigma_{\la,\infty}^{1}(\xi)$ for all $x\in\RR^N$, and 
	\begin{align}\label{wong-assumption}
	\lim\limits_{|x|\to\infty}\langle \xi \rangle^{-\kappa+q}|\sigma_\la^{1}(x,\xi)-\sigma_{\la,\infty}^{1}(\xi)|=0 
	\quad \text{ uniformly in } \xi\in\RR^N.
	\end{align}
\end{ass}	

The next theorem gives an analytic description of the essential spectrum of the closure $T\!=\!\overline{T_0}$ of the non-self-adjoint pseudo-differential matrix operator $T_0$ in~\eqref{T0}.

\begin{theorem}\label{thm:main}
	Let Assumptions~\ref{ass:A}, \ref{ass:B}, and \ref{ass:C} be satisfied. Then
	\begin{align}\label{sess(T)}
	\sess(T)\setminus\sigma(\ov{T_d})=\bigl\{\la\in\CC\setminus\sigma(\ov{T_d}):\, \sigma_{\la,\infty}^{1}(\xi)=0 \ \text{ for some } \xi\in\RR^N\bigr\},
	\end{align}
	where $\sigma_{\la,\infty}^{1}\in S^{\kappa-q}(\RR^N\!\times\RR^N)$ is the limiting symbol in the sense of \eqref{wong-assumption} of the symbol $\sigma_\la^{1}\in S_0^{\kappa-q}(\RR^N\!\times\RR^N)$ of the Schur complement given by \eqref{S_0}.	
\end{theorem}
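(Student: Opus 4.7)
The proof proceeds by reducing the Fredholm analysis of the matrix operator $T-\la$ to that of a single scalar pseudo-differential operator, the Schur complement $S(\la)$, and then invoking Wong's criterion for operators with Grushin-type symbols that stabilize at infinity.

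First I would fix $\la\in\CC\setminus\sigma(\ov{T_d})$ and apply Proposition~\ref{thm:Schur-complement} to translate the Fredholm property of $T-\la$ into that of $S(\la)$. This is the crucial equivalence established in the preceding sections through the parallel constructions of left approximate inverses in Theorems~\ref{thm:approx.inv} and \ref{thm2:approx.inv}. Thus, restricted to $\CC\setminus\sigma(\ov{T_d})$, the essential spectrum of $T$ coincides with the set of $\la$ for which $S(\la)$ fails to be Fredholm.

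Next I would invoke the key result of M.\,W.\ Wong from \cite{Wong88-Communications}. Under Assumption~\ref{ass:C}, the symbol $\sigma_\la^1$ of $S_1(\la)$ belongs to the Grushin class $S_0^{\kappa-q}(\RR^N\!\times\RR^N)$ and admits the $x$-independent limiting symbol $\sigma_{\la,\infty}^1\in S^{\kappa-q}(\RR^N\!\times\RR^N)$ in the sense of \eqref{wong-assumption}. Wong's theorem then asserts that the closure of such a pseudo-differential operator is Fredholm on $L^2(\RR^N)$ precisely when its limiting symbol is bounded away from zero in the elliptic sense; combined with the fact that $S(\la)$ is already uniformly elliptic by Lemma~\ref{lem:Schur.elliptic} (so the symbol is automatically controlled for large $|\xi|$), this reduces to the pointwise nonvanishing condition
\[
\sigma_{\la,\infty}^1(\xi)\neq 0 \quad \text{for all } \xi\in\RR^N.
\]
Hence $S(\la)$ fails to be Fredholm if and only if there exists some $\xi\in\RR^N$ with $\sigma_{\la,\infty}^1(\xi)=0$.

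Chaining the two equivalences yields
\[
\la\in\sess(T)\setminus\sigma(\ov{T_d}) \;\Longleftrightarrow\; T-\la \text{ not Fredholm} \;\Longleftrightarrow\; S(\la) \text{ not Fredholm} \;\Longleftrightarrow\; \exists\,\xi\in\RR^N:\ \sigma_{\la,\infty}^1(\xi)=0,
\]
which is exactly \eqref{sess(T)}. The main obstacle I anticipate is the careful bookkeeping between Wong's statement (typically phrased for the closure of a pseudo-differential operator viewed as an operator in $L^2$) and our setting, where $S(\la)$ is the closure of $S_1(\la)$ with the natural Sobolev domain $H^{\kappa-q}(\RR^N)$ identified in \eqref{dom:S(la)}; this identification, together with the uniform ellipticity established in Lemma~\ref{lem:Schur.elliptic}, is what makes the Grushin-stabilization hypothesis the only remaining condition needed to invoke Wong's theorem in the form stated above.
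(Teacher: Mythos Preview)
Your proposal is correct and follows essentially the same route as the paper: reduce to the Schur complement via Proposition~\ref{thm:Schur-complement}, note uniform ellipticity of $S(\la)$ from Lemma~\ref{lem:Schur.elliptic}, and then apply Wong's criterion \cite[Theorem~3.1]{Wong88-Communications} under Assumption~\ref{ass:C} to characterize when $0\in\sess(S(\la))$. The paper's proof is just a terser version of exactly this chain of equivalences.
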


\begin{proof}
For $\la\in\CC\setminus\sigma(\ov{T_d})$, Theorem~\ref{thm:Schur-complement} implies that
	\begin{equation}\label{Schur:1}
	\la\in\sess(T) \quad \Longleftrightarrow \quad 0\in\sess(S(\la)).
	\end{equation}
By Assumption~\ref{ass:A}, Lemma \ref{lem:Schur.elliptic} shows that $S(\la)$ 
is uniformly elliptic on $\RR^N$. Because of Assumption~\ref{ass:C}, we can apply \cite[Theorem~3.1]{Wong88-Communications} which yields 
	\begin{align}\label{Wong:2}
	0\in\sess(S(\la)) \quad \Longleftrightarrow \quad \sigma_{\la,\infty}^{1}(\xi)=0 \quad \text{for some} \quad \xi\in\RR^N.
	\end{align}
Combining \eqref{Schur:1} and \eqref{Wong:2}, we obtain the claim. 
\end{proof}

\begin{remark}
\label{last}
(i) If the second Schur complement $S_2(\la): L^2(\RR^N)\to L^2(\RR^N)$ defined by $S_2(\la):=T_d-\la-T_c(T_a-\la)^{-1}T_b$ on $\Dom(S_2(\la))\!:=\!\sS(\RR^N)$ for $\la \!\in\! \CC \setminus \sigma(\overline{T_a})$ 
satisfies the analogue of Assumption \ref{ass:C} with limiting symbol $\sigma_{\la,\infty}^2 \in S^{\kappa-m}(\RR^N\!\times\RR^N)$, one can obtain an analogue of Theorem~\ref{thm:main}:
  \begin{align}\label{sess(T)2}
	\sess(T)\setminus\sigma(\ov{T_a})=\bigl\{\la\in\CC\setminus\sigma(\ov{T_a}):\,\sigma_{\la,\infty}^2(\xi)=0 \ \text{ for some } \xi\in\RR^N\bigr\};
	\end{align}
note that the sets on the right-hand sides of \eqref{sess(T)}, \eqref{sess(T)2} must coincide outside of the set $\sigma(\ov{T_a}) \cup \sigma(\ov{T_d})$. Thus combining \eqref{sess(T)}, \eqref{sess(T)2} 
the exceptional set $\sigma(\ov{T_d})$ (or $\sigma(\ov{T_a})$, respectively) in the 
analytic description of the essential spectrum $\sess(T)$ can be reduced to the intersection $\sigma(\ov{T_a}) \cap \sigma(\ov{T_d})$.
\vspace{1mm}
	
	\noindent
	(ii) The reduction of the exceptional set from $\sigma(\ov{T_d})$ (or $\sigma(\ov{T_a})$, respectively) to $\sigma(\ov{T_a}) \cap \sigma(\ov{T_d})$ may be considerable, 
	as shown in the application in the next section; there $\sigma(\ov{T_d})$ is a line, while $\sigma(\ov{T_a}) \cap \sigma(\ov{T_d})$ consists of at most finitely many points and may even be empty.  
\end{remark}

\section{Application to the falling liquid film problem}
The spectral properties of the following matrix differential operator were used in examining the interaction of two-dimensional solitary pulses on falling liquid films in \cite{Pradas-Tseluiko-Kalliadasis-2011}. In the Hilbert space $L^2(\RR)\oplus L^2(\RR)$ we consider 
	\begin{equation}\label{T0Film}
	T_0\!:=\!\begin{pmatrix}
	\ds \frac{9\eta}{2\delta}D^2 \!+\! \phi_1 D \!+\! \phi_0 \!&\! \ds\psi_3 D^3 \!+\! \psi_2D^2 \!+\! \psi_1D \!+\! \psi_0\! \\[1.5ex]
	-D  & c_0 D 
	\end{pmatrix}\!\!, \; \Dom(T_0)\!:=\!\sS(\RR)\oplus\sS(\RR)
	\end{equation}
with $D = \d/\d x$ and $\phi_j,\psi_j\in B^\infty(\RR,\CC)$; here $B^\infty(\RR,\CC)$ denotes the space of all functions in $C^\infty(\RR,\CC)$ having bounded derivatives of arbitrary orders. It is assumed that the coefficient functions in \eqref{T0Film} have the following asymptotic behaviour:
	\begin{align*}
	& \phi_1(x)=c_0-\frac{17}{21}+\o(1), \;\  \phi_0(x)=-\frac{5}{2\delta}+\o(1), \\
	& \psi_3(x)=\frac{5}{6\delta}+\o(1), \;\  \psi_2(x)=-\frac{2\eta}{\delta}+\o(1), \;\ \psi_1(x)=\frac{1}{7}+\o(1), \;\ \psi_0(x)=\frac{5}{2\delta}+\o(1)
	\end{align*}
as $|x|\to\infty$, where $\delta, \eta, c_0 \in\RR$ are physical constants corresponding to the reduced Reynolds number, viscous-dispersion number and the speed at which solitary pulses propagate, respectively. 

\begin{theorem}
	The essential spectrum of the closure $T$ of $\,T_0$ in \eqref{T0Film} is given by
	\begin{equation}\label{ess.spec.film.1}
	\sess(T)\setminus\Lambda =\Bigl\{-\dfrac{1}{2}\alpha(\xi)\pm \dfrac{1}{2}\sqrt{\alpha(\xi)^2-4\beta(\xi)}: \: \xi\in\RR \Bigr\}\setminus\Lambda
	\end{equation}
	outside of a finite exceptional set of points $\Lambda\subset\ii\RR$ where, for $\xi\in\RR$, 
	\begin{equation}\label{alpha.beta}
	\begin{aligned}
	\alpha(\xi)\!&:=\!\frac{5}{2\delta}\!-\!\Bigl(2c_0\!-\!\frac{17}{21}\Bigr)(\ii\xi)\!-\!\frac{9\eta}{2\delta}(\ii\xi)^2, \\[0.5ex]
	\beta(\xi)\!&:=\!\frac{5}{6\delta}(\ii\xi)^4\!+\!\frac{9\eta}{2\delta}\Bigl(c_0-\frac{4}{9}\Bigr)(\ii\xi)^3\!+\!\Bigl(\frac{1}{7}\!+\!c_0\Bigl(c_0\!-\!\frac{17}{21}\Bigr)\Bigr)(\ii\xi)^2\!+\!
	\frac{5}{2\delta}(1\!-\!c_0)(\ii\xi).
	\end{aligned}
	\end{equation}
	\end{theorem}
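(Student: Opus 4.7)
The plan is to read off the orders from \eqref{T0Film}, verify Assumptions~\ref{ass:A}, \ref{ass:B}, \ref{ass:C}, apply Theorem~\ref{thm:main}, solve the resulting limiting-symbol equation $\sigma^1_{\la,\infty}(\xi)=0$ as a quadratic in $\la$, and finally reduce the exceptional set from $\sigma(\ov{T_d})$ to a finite subset of $\ii\RR$ via Remark~\ref{last}.

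First I read off $m=2$, $n=3$, $p=1$, $q=1$, so $\kappa=\max\{m+q,n+p\}=4$ and the Schur complement has order $\kappa-q=3$. The determinant of the principal symbol matrix equals $\psi_3(x)(\ii\xi)^4+\tfrac{9\eta c_0}{2\delta}(\ii\xi)^3$; since $\psi_3(x)\to 5/(6\delta)\neq 0$ as $|x|\to\infty$ (and $\psi_3$ is nonvanishing on compacta, which is built into the model), this gives $|\det M(x,\xi)|\gtrsim\langle\xi\rangle^4$ for $|\xi|\gtrsim 1$, i.e.\ Assumption~\ref{ass:A}. Assumption~\ref{ass:B} is automatic since $n,p>0$.

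Because $T_d=c_0D$ has constant coefficients, $\sigma(\ov{T_d})=\ii c_0\RR$ and, for $\la\notin\sigma(\ov{T_d})$, $(T_d-\la)^{-1}$ is the Fourier multiplier with symbol $(c_0\ii\xi-\la)^{-1}$. Composing, the symbol of $S_1(\la)$ is
$$\sigma_\la^1(x,\xi)=a(x,\xi)-\la-\frac{b(x,\xi)\,(-\ii\xi)}{c_0\ii\xi-\la},$$
and the stated $\o(1)$-stabilization of the $\phi_j,\psi_j$ at $\pm\infty$, combined with the fact that the off-diagonal lower-row and the second diagonal entries already have constant coefficients, places $\sigma_\la^1$ in the Grushin class $S^3_0$ and produces the $x$-independent limiting symbol
$$\sigma_{\la,\infty}^1(\xi)=a_\infty(\xi)-\la+\frac{\ii\xi\,b_\infty(\xi)}{c_0\ii\xi-\la},$$
where $a_\infty,b_\infty$ are obtained by replacing each coefficient by its limit; this yields Assumption~\ref{ass:C}. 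By Theorem~\ref{thm:main}, for $\la\notin\sigma(\ov{T_d})$ one has $\la\in\sess(T)$ iff $\sigma_{\la,\infty}^1(\xi)=0$ for some $\xi\in\RR$. Multiplying by the nonzero denominator $c_0\ii\xi-\la$, this becomes
$$\la^2-(a_\infty(\xi)+c_0\ii\xi)\la+c_0\ii\xi\,a_\infty(\xi)+\ii\xi\,b_\infty(\xi)=0,$$
which after a short polynomial expansion using the explicit limits of the coefficients is exactly $\la^2+\alpha(\xi)\la+\beta(\xi)=0$ with $\alpha,\beta$ in \eqref{alpha.beta}. Solving this quadratic yields the two branches in \eqref{ess.spec.film.1}.

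To shrink the exceptional set $\sigma(\ov{T_d})=\ii c_0\RR$ to a finite $\Lambda\subset\ii\RR$, I run the same program for the second Schur complement $S_2(\la)=T_d-\la-T_c(T_a-\la)^{-1}T_b$ on $\CC\setminus\sigma(\ov{T_a})$ and combine with \eqref{sess(T)} via Remark~\ref{last}, which cuts the exceptional set down to $\sigma(\ov{T_a})\cap\sigma(\ov{T_d})$. The main obstacle is the finiteness of this intersection: the essential spectrum of the second-order elliptic ODE $T_a$ is the parabola $\{a_\infty(\xi):\xi\in\RR\}$ whose real part $-\tfrac{9\eta}{2\delta}\xi^2-\tfrac{5}{2\delta}$ stays bounded away from $\ii\RR$ for the physical sign of $\eta/\delta$, so only isolated eigenvalues of $T_a$ of finite multiplicity can contribute; a shooting/Wronskian argument based on the constant-coefficient limiting problems at $\pm\infty$ then shows that only finitely many such eigenvalues lie on the line $\ii c_0\RR$, producing the finite set $\Lambda\subset\ii\RR$. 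This ODE-theoretic finiteness step is the only non-symbolic ingredient of the argument and is where I expect the technical work to concentrate.
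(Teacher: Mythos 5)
Your verification of Assumptions~\ref{ass:A}, \ref{ass:B}, \ref{ass:C}, the computation of the Schur complement symbol (using that the bottom row of $T_0$ has constant coefficients so that $(T_d-\la)^{-1}T_c$ is an exact Fourier multiplier), and the reduction of $\sigma_{\la,\infty}^1(\xi)=0$ to the quadratic $\la^2+\alpha(\xi)\la+\beta(\xi)=0$ all match the paper and are correct; the identification of the exceptional set as $\sigma(\ov{T_a})\cap\sigma(\ov{T_d})$ via Remark~\ref{last} is likewise the paper's route.

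Where you diverge is the final step, the finiteness of $\Lambda=\sigma(\ov{T_a})\cap\ii\RR$, and here your sketch has a genuine gap. The observation that $\sess(\ov{T_a})$ is a parabola with $\Re\,a_\infty(\xi)\le -\tfrac{5}{2\delta}$, hence off $\ii\RR$, is right, and it does tell you that any point of $\sigma(\ov{T_a})\cap\ii\RR$ is an isolated eigenvalue of finite multiplicity. But a shooting/Wronskian argument on the constant-coefficient limits does not by itself exclude an unbounded sequence of such eigenvalues accumulating only at $\infty$ along the line $\ii\RR$; to rule that out one needs a uniform bound for $|\la|$ large on $\ii\RR$. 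The paper closes exactly this gap by a numerical-range estimate: one shows $\overline{W(\ov{T_a})}$ is contained in a sector $\{z:|\arg(v-z)|\le\theta\}$ opening into the left half-plane with vertex $v\in\RR$ and semi-angle $\theta<\pi/2$, verifies that some large $\mu_0>0$ lies in $\rho(\ov{T_a})$, and invokes \cite[Theorem V.3.2]{Kato} to conclude $\sigma(\ov{T_a})\subset\overline{W(\ov{T_a})}$. Since such a sector meets $\ii\RR$ in a bounded segment, $\sigma(\ov{T_a})\cap\ii\RR$ is a bounded set of isolated eigenvalues and hence finite. You should replace the shooting sketch by this sectoriality argument (or at minimum supply the missing large-$|\la|$ estimate that your Evans-function approach would require), since as written the finiteness claim is asserted rather than proved.
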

\begin{proof}
Clearly, $T_0$ is uniformly Douglis-Nirenberg elliptic on $\RR$ since both off-diagonal entries are uniformly elliptic on $\RR$ in the usual sense and thus Assumption~\ref{ass:A} is satisfied, see Lemma~\ref{lem:equiv.charac}\,(ii). Moreover,  $\sigma(\ov{T_d})=\ii\RR$ and the first Schur complement is given by
	\begin{align*}
	S_1(\la) &=\sum_{j=0}^2 \phi_j D^j - \la - \Bigl(\sum_{k=0}^3\psi_k D^k\Bigr) (c_0 D-\la)^{-1} (-D), \quad \la\in\CC\setminus\ii\RR.
	\end{align*}
It is not difficult to see that the symbol of $S_1(\la)$ has the form
\begin{align*}
	\sigma_\la^{1}(x,\xi) = \sum_{j=0}^2 \phi_j (\ii\xi)^j - \la + \frac{\ii\xi}{c_0\ii\xi-\la} \sum_{k=0}^3\psi_k (\ii\xi)^k, \quad (x,\xi)\in\RR\times\RR,  
\end{align*}
and it belongs to the symbol class $S_0^3(\RR\times\RR)$ due to the asymptotic expansions of the functions $\phi_j$ and $\psi_k$. Further, it can be easily checked that Assumption~\ref{ass:C} is satisfied with the limiting symbol 
	\begin{equation}\label{film.lim.symbol}
	\sigma_{\la,\infty}^{1}(\xi) = \frac{1}{c_0\ii\xi-\la} \big(\la^2+\alpha(\xi)\la+\beta(\xi)\big), \quad \xi\in\RR,
	\end{equation}
where $\alpha(\xi)$ and $\beta(\xi)$ are given by \eqref{alpha.beta}. Therefore, Theorem~\ref{thm:main} applies and in view of \eqref{film.lim.symbol} it yields that, for the closure $T$ of $T_0$ in $L^2(\RR)\oplus L^2(\RR)$,
	\begin{equation}\label{ess.spec.film}
	\begin{aligned}
	\sess(T)\setminus\ii\RR &= \bigl\{\la\in\CC\setminus\ii\RR:\, \la^2+\alpha(\xi)\la+\beta(\xi)=0 \:\: \text{for some} \:\: \xi\in\RR\bigr\}\\
						&=\Bigl\{-\dfrac{1}{2}\alpha(\xi)\pm \dfrac{1}{2}\sqrt{\alpha(\xi)^2-4\beta(\xi)}: \: \xi\in\RR \Bigr\}\setminus\ii\RR.
	\end{aligned}
	\end{equation}
By the hypothesis on the coefficient functions we can also employ the second Schur complement approach in an analogous way to get
	\begin{equation}\label{ess.spec.film.11}
	\sess(T)\setminus\sigma(\ov{T_{a}}) =\Bigl\{-\dfrac{1}{2}\alpha(\xi)\pm \dfrac{1}{2}\sqrt{\alpha(\xi)^2-4\beta(\xi)}: \: \xi\in\RR \Bigr\}\setminus\sigma(\ov{T_{a}}),
	\end{equation}
so that we obtain the description \eqref{ess.spec.film.1} with exceptional set 
	\[
	\Lambda:=\sigma(\ov{T_{a}})\cap\sigma(\ov{T_{d}})=\sigma(\ov{T_{a}})\cap\ii\RR,
	\]
see Remark \ref{last} (i). Since $\ov{T_{a}}$ has asymptotically constant coefficients, $\sess(\ov{T_{a}})$ can be calculated explicitly. In fact, $\sess(\ov{T_{a}})$ is a horizontal parabola in the open left half-plane with vertex at $(-\frac{5}{2\delta},0)$. On the other hand, it is not \vspace{-1mm} difficult to show that the closure of the numerical range $\overline{W(\ov{T_{a}})}$ of $\ov{T_{a}}$ is contained in a horizontal sector in some left half-plane in $\CC$ with semi-angle $\theta\in(0,\pi/2)$ and vertex in $\RR$. By standard arguments, one can show that for sufficiently large $\mu_0\in[0,\infty)$ \vspace{-1mm} we have $\mu_0\in\rho(\ov{T_{a}})$ and hence 
$\sigma(\ov{T_{a}})\subset \ov{W(\ov{T_{a}})}$, see \cite[Theorem V.3.2]{Kato}. Altogether, it follows that $\Lambda\subset\ii\RR$ is a finite set.
\end{proof}

The locus of the curve describing the essential spectrum outside of the finite set $\Lambda$ in \eqref{ess.spec.film.1} is shown in Figures~\ref{fig:film} (a) and (b) for the physical parameters corresponding to water, i.e.\ $\delta=0.98$, $\eta=0.01$, $c_0=1.15$, see \cite{Pradas-Tseluiko-Kalliadasis-2011}. 

\begin{remark}
	 If the coefficients $\phi_0$ and $\phi_1$ in the left upper corner of $T_0$ \vspace{-1mm}satisfy 
	\begin{equation}\label{cond:omega1,2}
	\omega_1:=-\sup_{x\in\RR} \phi_0(x)>0, \quad \omega_2:=\sup_{x\in\RR}|\phi_1(x)|<\sqrt{\frac{9\eta}{\delta}\omega_1},
	\vspace{-1mm}
	\end{equation}
	which holds e.g.\ if $ \phi_1 \equiv c_0-\frac{17}{21}$ and $\phi_0 \equiv -\frac{5}{2\delta}$ on $\RR$,  
	the exceptional set $\Lambda$ in \eqref{ess.spec.film.1} is empty, and can thus can be omitted, 
	\begin{align}
	\label{verylast}
	\Lambda=\sigma(\ov{T_{a}}) \cap \ii\RR = \emptyset.
	\end{align}
	To see this, note that for any $\psi\in\Dom(T_a)=\sS(\RR)$ and $\varepsilon>0$, integration by parts and Young's inequality easily yield
	\begin{equation*}
	\Re \langle T_a\psi,\psi\rangle \leq \Bigl(\varepsilon\omega_2-\frac{9\eta}{2\delta}\Bigr)\|\psi'\|^2_{L^2(\RR)} + \Bigl(\frac{\omega_2}{2\varepsilon}-\omega_1\Bigr)\|\psi\|^2_{L^2(\RR)}.
	\end{equation*}
	Due to Assumption \eqref{cond:omega1,2} we can choose $\varepsilon=\varepsilon_0$ such that $\frac{\omega_2}{2\omega_1}<\varepsilon_0<\frac{9\eta}{2\delta\omega_2}$ and \vspace{-1mm} thus 
	\[
	\sup_{\substack{\psi\in\sS(\RR) \\ \psi\neq0}} \frac{\Re \langle T_a\psi,\psi\rangle}{\|\psi\|^2_{L^2(\RR)}} \leq \frac{\omega_2}{2\varepsilon_0}-\omega_1<0. \vspace{-1mm}
	\]
	Since we already know that $\sigma(\ov{T_{a}})\subset \ov{W(\ov{T_{a}})}$, \eqref{verylast} follows. 
	
%
\end{remark}

\begin{figure}[h!]
	\centering
	\begin{subfigure}{\textwidth}
		\centering
		\includegraphics[width=0.74\textwidth,height=7.4cm,fbox]{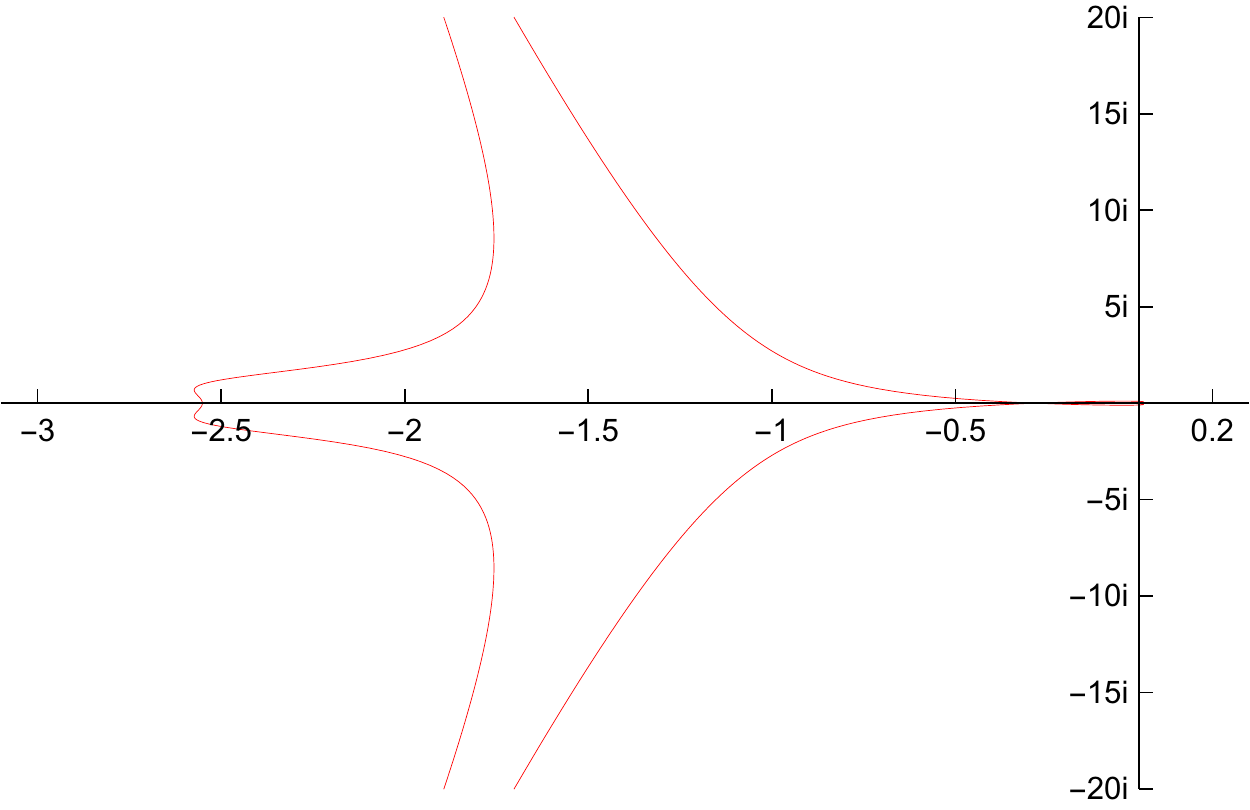}
		\caption{}	
	\end{subfigure}	\par\vspace{3mm}
		
	\begin{subfigure}{\textwidth}
		\centering
		\includegraphics[width=0.74\textwidth, height=7.4cm,fbox]{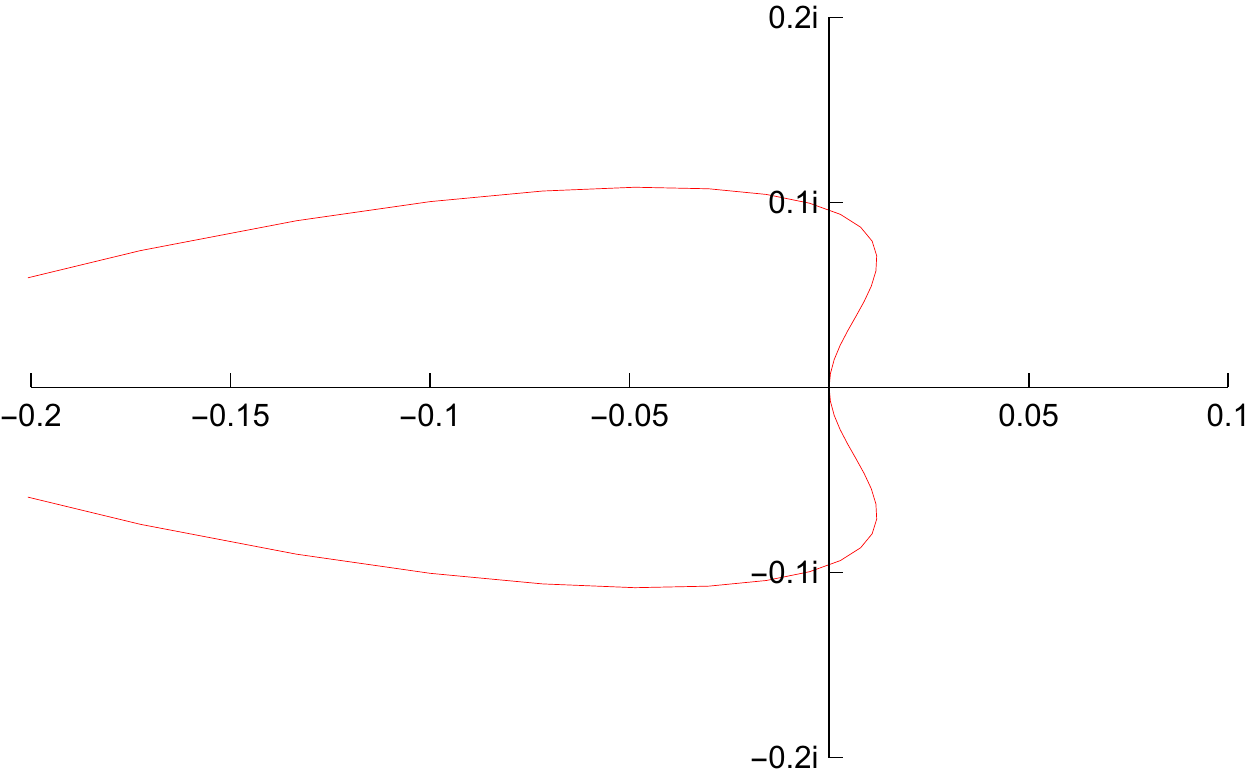}
		\caption{}
	\end{subfigure} 
	\caption{\small{The essential spectrum of the closure of the operator matrix $T_0$ in \eqref{T0Film} in $[-3, 0.2]\times[-20\ii,20\ii]$, see (a), and zoomed in near the imaginary axis in $[-0.2,0.1]\times[-0.2\ii,0.2\ii]$, see (b).}}\label{fig:film}
\end{figure}

\smallskip
\noindent
\textbf{Acknowledgements.} The authors gratefully acknowledge the support of the \emph{Swiss National Science Foundation,} SNF grant no.\ 
$200020\_146477$. The first author also gratefully acknowledges the support of SNF Early Postdoc.Mobility grant no.\ $168723$ and thanks the Department of Mathematics at University College London for the kind hospitality. 

{\small
	\bibliographystyle{acm}
	\bibliography{ref_pdoA}
}

\end{document}